\DeclareMathOperator*{\moplus}{\raisebox{-0.25ex}{\scalebox{1.2}{$\bigoplus$}}}
\newcommand{\type}[1]{$\mathrm{#1}$}
\newcommand{\Lie}{ \operatorname{{Lie}}}
\newcommand{\red}{\operatorname{{red}}}
\newcommand{\Ad}{\operatorname{{Ad}}}
\newcommand{\Aut}{\operatorname{{Aut}}}
\newcommand{\GL}{\operatorname{GL}}
\newcommand{\Gr}{\operatorname{Gr}}
\newcommand{\Pic}{\operatorname{{Pic}}}
\newcommand{\W}{\operatorname{W}}
\newcommand{\N}{\operatorname{N}}
\newcommand{\Orb}{\operatorname{O}}
\newcommand{\aaa}{\mathrm{a}}
\newcommand{\s}{\mathrm{s}}
\newcommand{\m}{\mathrm{m}}
\newcommand{\g}{\mathrm{g}}
\newcommand{\z}{\mathrm{z}}
\newcommand{\rt}{\mathrm{t}}
\newcommand{\rrr}{\mathrm{r}}
\newcommand{\rc}{{\mathrm{c}}}
\newcommand{\balpha}{{\boldsymbol\alpha}}
\newcommand{\Ga}{{\mathbb G}_{\mathrm{a}}}
\newcommand{\Gm}{{\mathbb G}_{\m}}
\newcommand{\ZZ}{{\mathbb{Z}}}
\newcommand{\PP}{{\mathbb{P}}}
\newcommand{\Sym}{{\mathfrak{S}}}
\newcommand{\fD}{{\mathfrak{D}}}
\newcommand{\fh}{{\mathfrak{h}}}
\newcommand{\fg}{{\mathfrak{g}}}
\newcommand{\fH}{{\mathfrak{H}}}
\newcommand{\fp}{{\mathfrak{p}}}
\newcommand{\fb}{{\mathfrak{b}}}
\newcommand{\cL}{\mathcal{L}}
\newcommand{\OOO}{{\mathscr{O}}}
\newcommand{\EEE}{{\mathscr{E}}}
\newcommand{\UUU}{{\mathscr{U}}}
\renewcommand{\emptyset}{\varnothing}
\newcommand{\xref}[1]{\textup{\ref{#1}}}
\renewcommand{\theequation}{\arabic{section}.\arabic{subsection}.\arabic{equation}}
\newtheorem{mthm}{Theorem}
\newtheorem{thm}[subsection]{Theorem}
\newtheorem{scor}[equation]{Corollary}
\newtheorem{lem}[subsection]{Lemma}
\newtheorem{prop}[subsection]{Proposition}
\theoremstyle{definition}
\newtheorem{srem}[equation]{Remark}
\newtheorem{sit}[equation]{}
\title[Fano--Mukai fourfolds: Automorphisms]{Fano--Mukai fourfolds of genus $10$ and their automorphism groups}
\author{Yuri Prokhorov}\thanks{
The research of the first author
 was partially supported by the HSE University Basic Research Program.
}
\author{Mikhail Zaidenberg}
\address{\emph{Yuri Prokhorov}\newline
Steklov Mathematical Institute, Moscow, Russian Federation
\newline
National Research University Higher School of Economics,
Russian Federation
\newline
Department of Algebra, 
Moscow State Lomonosov University, Russian Federation
}
\email{prokhoro@mi-ras.ru}
\address{\emph{Mikhail Zaidenberg}\newline Universit\'e Grenoble Alpes, CNRS, Institut Fourier, F-38000 Grenoble, France}
\email{Mikhail.Zaidenberg@univ-grenoble-alpes.fr}
\keywords{
Fano--Mukai fourfold, automorphism group}
\subjclass {14J45, 14J50}
\begin{document}

\begin{abstract} 
The automorphism groups of the Fano--Mukai fourfold of genus $10$ were studied in our previous paper \cite{PZ18}. 
In particular, we found in \cite{PZ18} the neutral components of these groups. 
In the present paper we finish the description of the discrete parts. 

Up to isomorphism, there are three special Fano--Mukai fourfold of genus $10$ with the
automorphism groups $\GL_2(\Bbbk)\rtimes\ZZ/2\ZZ$, $(\Ga\times\Gm)\rtimes\ZZ/2\ZZ$ and $\Gm^2\rtimes\ZZ/6\ZZ$, respectively. 
For any other Fano--Mukai fourfold $V$ of genus $10$ one has $\Aut(V)=\Gm^2\rtimes\ZZ/2\ZZ$.
\end{abstract}
\date{} 
\maketitle

\setcounter{tocdepth}{2}\tableofcontents

\section*{Introduction} 
Our base field is 
an algebraically closed field $\Bbbk$ of characteristic zero.
We use the notation of \cite{PZ18}. Let $V_{18}$ 
be a Fano--Mukai fourfold of genus $10$ and degree $18$ half-anticanonically embedded in $\PP^{12}$.
Recall that these fourfolds form a one-dimensional family, see e.g. (\cite{KapustkaRanestad2013}, \cite[Remark~13.4]{PZ18} 
or Corollaries~\ref{cor:orbits-isom.classes} and~\ref{cor:max-torus-new} below. 
This family contains two special members, namely, $V^{\s}_{18}$ with 
$\Aut^0(V)=\GL_2(\Bbbk)$ and $V^{\aaa}_{18}$ with $\Aut^0(V^{\aaa}_{18})=\Ga\times\Gm$ \cite[Theorem~1.3(i),(ii)]{PZ18}. 
For the general member $V^{\g}_{18}\notin \{V^{\aaa}_{18},\, V^{\s}_{18}\}$ one has $\Aut^0(V^{\g}_{18})=\Gm^2$ \cite[Theorem~1.3(iii)]{PZ18}.

In the present paper we complete the description of the automorphism groups $\Aut(V)$ of the Fano--Mukai fourfolds $V$ of genus $10$ in the general case $V=V^{\g}_{18}$, 
which is done in \cite{PZ18} only partially. 
Summarizing the results of \cite{PZ18} and the ones of Theorem~\ref{thm:aut} and Corollary~\ref{mthm:2:iii} below we get the following

\begin{mthm}
\label{thm:aut0} 
Let $V$ be a Fano--Mukai fourfold $V$ of genus $10$. 
Then the following assertions hold.
\begin{enumerate}
\item
\label{thm:aut0:i} 
The group $\Aut(V)$ is isomorphic to one of the following:
\renewcommand{\theequation}{\Alph{mthm}.\arabic{equation}}
\begin{eqnarray}
&&
\label{thm:aut0:GL2} 
\GL_2(\Bbbk)\rtimes \ZZ/2\ZZ,
\\
&&
\label{thm:aut0:G2-Z6}
\Gm^2\rtimes \ZZ/6\ZZ,
\\
&&
\label{thm:aut0:G2-Z2}
\Gm^2\rtimes \ZZ/2\ZZ,
\\
&&
\label{thm:aut0:GaGm}
(\Ga\times\Gm)\rtimes \ZZ/2\ZZ.
\end{eqnarray}
In cases \eqref{thm:aut0:GL2}, \eqref{thm:aut0:G2-Z2}, and \eqref{thm:aut0:GaGm} 
the generator of the subgroup $\ZZ/2\ZZ$ acts on $\Aut^0(V)$ via the involution $g\mapsto (g^{\rt})^{-1}$ where 
we let $g^{\rt}:=g$ in the case of an abelian group $\Aut^0(V)$.
In case~\eqref{thm:aut0:G2-Z6} the generator of $\ZZ/6\ZZ$
acts on $\Gm^2$ via an automorphism of order $6$ from $\Aut(\Gm^2)\cong\GL_2(\ZZ)$. 
\item
\label{thm:aut0:ii} 
In cases \eqref{thm:aut0:GL2}, \eqref{thm:aut0:G2-Z6}, \eqref{thm:aut0:GaGm} the corresponding variety $V$
is unique up to isomorphism.
In case \eqref{thm:aut0:G2-Z2} the variety $V$ vary in a one-parameter family.
\item
\label{thm:aut0:iv} 
Let $G$ be the simple algebraic group of type \type{G_2} and $\Omega\subset\PP^{13}$ be the adjoint variety of $G$.
Under the Mukai realization of $V$ 
as a hyperplane section of $\Omega$, 
any element of $\Aut(V)$ is induced by an automorphism of $\Omega$ and 
the group $\Aut(V)$ coincides with the stabilizer of $V$ in $G$.
\end{enumerate}
\end{mthm}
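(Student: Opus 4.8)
The plan is to deduce Theorem~\ref{thm:aut0} by combining the known neutral components with a study of the stabilizer $\Stab_G(V)$ of $V$ inside the group $G$ of type \type{G_2}. By \cite[Theorem~1.3]{PZ18} the group $\Aut^0(V)$ is one of $\GL_2(\Bbbk)$, $\Gm^2$ or $\Ga\times\Gm$, so all that remains is to determine the finite component group $\Aut(V)/\Aut^0(V)$ and its conjugation action on $\Aut^0(V)$. The four cases of~\ref{thm:aut0:i} then correspond to these three neutral components, the component $\Gm^2$ occurring with finite part either $\ZZ/2\ZZ$ or $\ZZ/6\ZZ$; assertion~\ref{thm:aut0:ii} will come from counting the relevant orbits, and the geometric heart of the statement is~\ref{thm:aut0:iv}.

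I would work throughout in the Mukai model $V=\Omega\cap H$, where $\Omega\subset\PP(\fg)=\PP^{13}$ is the adjoint variety and $H$ is a hyperplane. Using the Killing form to identify $\fg\cong\fg^{*}$, a hyperplane corresponds to a line $[\xi]\in\PP(\fg)$, so the hyperplane sections are parametrized by $\PP(\fg)$ and, by Corollaries~\ref{cor:orbits-isom.classes} and~\ref{cor:max-torus-new}, their isomorphism classes are in bijection with the $G$-orbits of such lines. Granting~\ref{thm:aut0:iv}, that every automorphism of $V$ is induced by an element of $G=\Aut(\Omega)$, one obtains $\Aut(V)=\Stab_G(V)=\Stab_G([\xi])$, which reduces the whole problem to the Lie-theoretic computation of the stabilizer of a line in the adjoint representation of \type{G_2}.

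The main obstacle is~\ref{thm:aut0:iv}, namely extending an arbitrary $\sigma\in\Aut(V)$ to $\Omega$. Since $V$ has Picard rank one, $\Pic(V)=\ZZ$ is generated by the hyperplane class $\cO_V(1)$, so $\sigma$ preserves it and acts linearly on $\PP^{12}=\PP\bigl(H^{0}(V,\cO_V(1))^{*}\bigr)$; here $H^{0}(\Omega,\cO_{\Omega}(1))=\fg^{*}$ surjects onto $H^{0}(V,\cO_V(1))$ with kernel $\Bbbk f$, where $f$ is the equation of $H$. The delicate point is to lift this $13$-dimensional linear action to a linear action on $\PP^{13}=\PP(\fg)$ preserving both the line $[\xi]$ and $\Omega$. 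I would achieve this by reconstructing $\Omega$ intrinsically from $V$: the adjoint variety is covered by lines, the lines on $V$ are exactly the lines of $\Omega$ contained in $H$, and $\sigma$ permutes them and respects the variety of minimal rational tangents. A Cartan--Fubini type extension, together with $\Aut(\Omega)=G$ (valid because the group of type \type{G_2} has trivial center and no outer automorphisms, so that $\Aut(\Omega)=\Aut(\fg)=G$), then forces $\sigma$ to extend uniquely. This is the step that carries the real geometric content, and it is established in Theorem~\ref{thm:aut}.

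With~\ref{thm:aut0:iv} in hand, I would finish by computing $\Stab_G([\xi])$ according to the orbit type. For $\xi$ regular semisimple, lying in a Cartan subalgebra $\fh$ and off every root hyperplane, the connected stabilizer is the maximal torus $T=\Gm^2$ and $\Stab_G([\xi])/T$ is the stabilizer of the projective point $[\xi]$ in the Weyl group $\W(G)$, which is dihedral of order $12$. Since $-\id\in\W(G)$ fixes every line and acts on $T$ by $t\mapsto t^{-1}$, one always gets at least the involution $g\mapsto (g^{\rt})^{-1}$ of case~\eqref{thm:aut0:G2-Z2}, and for a generic line this is the entire component group; at the two Weyl-conjugate (still regular) eigenlines of a Coxeter element, which has order $6$ as the Coxeter number of \type{G_2} equals $6$, the stabilizer is the cyclic group it generates, namely $\ZZ/6\ZZ$ with $-\id$ as its cube, acting on $T$ through an order-$6$ element of $\GL_2(\ZZ)$; this is case~\eqref{thm:aut0:G2-Z6}. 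The two members with larger neutral component come from the non-generic orbits: a non-regular semisimple $\xi$ with Levi centralizer $\GL_2(\Bbbk)$ gives $V^{\s}_{18}$ of case~\eqref{thm:aut0:GL2}, on which the residual involution is the outer automorphism $g\mapsto (g^{\rt})^{-1}$, while the orbit with stabilizer $\Ga\times\Gm$ gives $V^{\aaa}_{18}$ of case~\eqref{thm:aut0:GaGm}, on which it acts by inversion. Finally,~\ref{thm:aut0:ii} follows from the explicit correspondence between isomorphism classes and $G$-orbits of lines in Corollaries~\ref{cor:orbits-isom.classes},~\ref{cor:max-torus-new} and~\ref{mthm:2:iii}: the three distinguished orbits each give a single isomorphism class, whereas the generic regular-semisimple lines sweep out the one-parameter family of case~\eqref{thm:aut0:G2-Z2}.
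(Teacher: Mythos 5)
Your global strategy is the paper's: reduce everything, via part \ref{thm:aut0:iv}, to computing the stabilizer of a line $[\xi]\in\PP\fg$ in $G$, then identify $\Aut(V)$ inside $\N_G(T)\cong T\rtimes\W$ and compute the stabilizer of $[\xi]$ in $\W\cong\fD_6$ acting on $\PP\fh\cong\PP^1$. Your Lie-theoretic computation is correct and essentially identical in content to the paper's: your two ``eigenlines of a Coxeter element'' are exactly the two isotropic lines of the Killing form on $\fh$, which is how the paper locates the $\ZZ/6\ZZ$-orbit $\Orb_\rrr$ (Lemma~\ref{lemma:intersections} and Corollary~\ref{cor:stabilizer-D6}); your generic $\ZZ/2\ZZ$ is the center of $\W$, as in the paper; and the counting of isomorphism classes in \ref{thm:aut0:ii} matches the paper's use of $\W$-orbits on $\PP\fh$. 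Two small implicit points: you use the splitting $\N_G(T)\cong T\rtimes\W$, which is not automatic and which the paper justifies by citing \cite{AH17} (Remark~\ref{rem:stab}), and you use that $\Stab_G([\xi])$ normalizes $T$, which the paper derives from $(G_h)^0=T$ in Lemma~\ref{lemma:stabilizer}.

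The genuine gap is your treatment of part \ref{thm:aut0:iv}, which you yourself identify as the crux. A ``Cartan--Fubini type extension'' (Hwang--Mok) extends germs of biholomorphisms \emph{between Fano manifolds of the same dimension} and Picard number one that respect the varieties of minimal rational tangents; it does not provide, off the shelf, an extension of an automorphism of the fourfold $V$ to the ambient fivefold $\Omega$. Your sketch of ``reconstructing $\Omega$ intrinsically from $V$'' via lines is not substantiated: the lines of $V$ are only the $\Omega$-lines lying in the hyperplane $H=\langle V\rangle$, and nothing in your argument recovers, $\sigma$-equivariantly, the full cone of $\Omega$-lines through a point of $V$, nor the lift of the linear action from $\PP^{12}$ to $\PP^{13}$ fixing $[\xi]$ --- which is precisely the point at issue. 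You also mis-attribute where this step is established: Theorem~\ref{thm:aut} is the $\Gm^2$ stabilizer computation and contains no extension statement; in the paper the extension is Proposition~\ref{prop:ix} together with Corollary~\ref{cor:new}, proved by an entirely different, bundle-theoretic mechanism. Namely, the uniqueness of the Mukai rank-two bundle $\EEE$ gives a canonical embedding $V\hookrightarrow\Gr(2,W)$ with $W=H^0(V,\EEE)^\vee$, the kernel of $\Psi$ is spanned by a section $s\in\wedge^3 W^\vee$ whose stabilizer in $\GL(W)$ is $G$ and whose zero locus $Z\cong\Omega$ contains $i(V)$; this functorial chain transports any isomorphism $V\cong V'$ to an automorphism of $\Omega$, and $\Aut(\Omega)=G$ finishes. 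Note also that Corollary~\ref{cor:new} needs the effectivity of the action of $\Stab_G(V)$ on $V$ (\cite[Lemma~7.8]{PZ18}) to get the \emph{equality} $\Aut(V)=\Stab_G(V)$ rather than a mere inclusion --- a point your sketch skips. Until you either prove a genuine extension theorem for hyperplane sections of $\Omega$ or substitute the Mukai-bundle argument, part \ref{thm:aut0:iv}, and with it your whole reduction, remains unproven.
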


Besides, in Proposition~\ref{mthm:2:iii} we describe the action of the group $\Aut(V)$ on the set of cubic cones in $V$, 
that is, the cones over the rational twisted cubic curves. Except in the case \eqref{thm:aut0:GL2} this set is finite, 
hence each cubic cone is stable under the $\Aut^0(V)$-action. In Proposition~\ref{mthm:2:iii} the action of the 
component group $\Aut(V)/\Aut^0(V)$ on this set is determined. 

Some other families of Fano varieties demonstrate similar behavior of the automorphism groups. For instance, this concerns
the Fano threefolds of degree~$22$ and Picard number~$1$ \cite{Kuznetsov-Prokhorov, Prokhorov-1990c} 
and the Fano threefolds of degree~$28$ and
Picard number~$2$ \cite[\S~9]{Prz-Ch-Shr:19}.
The automorphism groups of generic Fano-Mukai varieties of genus $g\in {7,\ldots,10}$ were described in a recent preprint~\cite{DM21}.
In the appendix of \emph{loc. cit.} it was also shown that a generic Fano threefolds of degree~$22$ and Picard number~$1$ has trivial automorphism group.

The paper is organized as follows. In Section~\ref{sec:prelim}
we gather necessary preliminaries, in particular, some results from \cite{Muk89, Kuz06, PZ18, PZ20}. Theorem~\ref{thm:aut0} is proven in
Section~\ref{sec:aut}.

Our paper has grown up from an earlier version of the preprint~\cite{PZ20}, 
where the major results were already present. 

\subsubsection*{Acknowledgment.} 
The authors are grateful to Alexander Kuznetsov for helpful discussions
and Alexander Perepechko for a careful reading of the preliminary version of the manuscript and valuable remarks. Our thanks are due also to the referees for their remarks that helped us to improve the presentation. 

\section{Preliminaries}
\label{sec:prelim}
In this section we recall some facts from \cite{PZ18} and~\cite{PZ20} used in the sequel. 
Throughout the paper we let $V=V_{18}$ be a 
Fano--Mukai fourfold of genus $10$ half-anticanonically embedded in $ \PP^{12}$.

\subsection{The Mukai realization}
\label{ss:Mukai}
Consider the Lie algebra $\fg=\fg_2$ of the simple complex algebraic group $G$ of type \type{G_2}, the projective space $\PP\fg=\PP^{13}$, 
and the projective adjoint representation $\Ad(G)$ on $\PP\fg$. 
Let $P\subset G$ be a parabolic subgroup such that $\Omega=G/P\subset \PP^{13}$  
is the minimal nilpotent orbit of $G$, called
the \emph{adjoint variety},
see \cite{Muk89}.
Fix a maximal torus $T\subset P$ of $G$, and let $\fh=\Lie(T)$ be the corresponding Cartan subalgebra of  $\fg$. 
Choose a basis of simple roots $(\balpha_1, \balpha_2)$ of the root system $\Delta\subset \fh^\vee$ with 
$\balpha_1\in\Delta_\s$ and $\balpha_2\in\Delta_\ell$ where $\Delta_\ell$ and $\Delta_\s$ stand for the subsets of long 
and short roots, respectively. We identify $\fg$ with the dual vector space $(\fg)^\vee$ via the duality on $\fg$ defined by 
the Killing form and we identify the dual projective space $\PP\fg=\PP^{13}$ with its dual $(\PP^{13})^\vee$. Under this identification
the $\Ad(G)$-orbits coincide with the corresponding $\Ad^*(G)$-orbits \cite[Section~1.3]{CMG93}. 
Consider the root space decomposition
\begin{equation*}
\fg=\fh
\moplus\Bigl(\moplus_{\balpha\in\Delta_\ell}
\fg_\balpha\Bigr)
\moplus\Bigl(\moplus_{\balpha\in\Delta_\s}
\fg_\balpha\Bigr)
\end{equation*}
where $\fg_\balpha= \Bbbk e_\balpha$ for 
$\balpha\in\Delta$ is the root subspace generated by the root vector $e_\balpha$. 
One can choose for $P$
the parabolic subgroup of $G$ with Lie algebra 
\begin{equation*}
\fp=\fh
\moplus\Bigl(\moplus_{\balpha\in\Delta^+}
\fg_\balpha\Bigr)
\moplus\fg_{-\balpha_1}=
\fb\moplus\fg_{-\balpha_1}
\end{equation*}
where $\fb$ is the corresponding Borel subalgebra of $\fg$.

\begin{sit}{\it Mukai's vector bundle.}
\label{sit:Mukai-presentation} 
Recall \cite{Muk88, Muk89, Muk92} that any Fano--Mukai fourfold of genus $10$ can be presented as a hyperplane section $V(h)=\Omega\cap h^\bot$ 
for a nonzero element $h\in\fg=\Lie(G)$ where $h^\bot$ stands for the projectivization of the orthogonal complement of $\Bbbk h$ in $\fg$ 
with respect to the Killing form. Let us recall this construction in more detail. 

Any Fano--Mukai fourfold $V$ of genus $10$ carries a stable vector bundle $\EEE$ of rank two such that
\begin{itemize}\item
$\EEE$ is generated by global sections;
\item
$\rc_1(\EEE)$ is an ample generator of $\Pic(V)$; 
\item
$\dim H^0(V,\EEE) = 7$.
\end{itemize}
The vector bundle $\EEE$ is unique up to isomorphism, see \cite[Section~2]{Muk89}, and also 
\cite[Proposition~B.1.5]{Kuznetsov-Prokhorov-Shramov} where the uniqueness is proven in the case of Fano threefolds. 
The proof works mutatis mutandis in our case as well.
\end{sit}
Denote $W:=H^0(V,\EEE)^\vee$.
The vector bundle $\EEE$ determines an embedding
\[
i: V \hookrightarrow \Gr(2,W)
\]
such that $\EEE=i^* \UUU^\vee$ where $\UUU$ is the tautological rank-2 vector bundle on the Grassmannian $\Gr(2,W)=\Gr(2,7)$. 
Consider the  natural map
\[
\Psi:  H^0\big(\Gr(2,W),\, \UUU^{\perp}(1)\big) \longrightarrow 
 H^0\big(V,\,i^* \UUU^{\perp}(1)
\big),
\]
where 
$\UUU^{\perp}:=\ker \big(W^\vee \otimes \OOO_{\Gr(2,W)} \to \UUU^\vee\big)$.
The kernel of $\Psi$ is one-dimensional and generated by a section 
\[
s\in H^0\big(\Gr(2,W),\, \UUU^{\perp}(1)\big)=\wedge^3  W^\vee
\]
so that $G$ is the stabilizer of $s$ in $\GL(W)$.
Hence the zero locus $Z=Z(s)$ admits an action of $G$; this is a $G$-homogeneous space isomorphic to $\Omega$ 
(see  \cite[Section~8]{Kuz06} for details). 
The image $i(V)$ is contained in $Z$. 
This defines embeddings 
\begin{equation}
\label{eq:Z} 
V\stackrel{i} {\hookrightarrow} Z \subset \Gr(2,W)
\end{equation}
such that $i(V)$ is a hyperplane section of $Z$ under the Pl\"{u}cker embedding of $\Gr(2,W)$.

\subsection{The orbits of the adjoint action of $G$}
\label{ss:orbits} 
Any orbit of the projective adjoint representation is the image of an orbit of the adjoint representation. We keep the terminology ``regular orbit'', 
``semisimple orbit'', ``nilpotent orbit'', etc. for the images in $\PP^{13}$ of the $\Ad(G)$-orbits in $\fg$ 
consisting of regular, semisimple, nilpotent, etc. elements, respectively. 
For the following facts see \cite[Theorem~8.25]{Tev05}.

\begin{prop}
\label{prop:summary} 
$\,$
\begin{enumerate}
\item
\label{prop:summary:i}
The fivefold $\Omega$ is the unique closed nilpotent orbit of the $\Ad(G)$-action on $\PP\fg=\PP^{13}$. 
It passes through the points 
$\PP \fg_\balpha$ which correspond to the long roots 
$\balpha\in\Delta_\ell$.

\item
\label{prop:summary:ii} 
The dual projective variety $D_\ell$ of $\Omega$ is an irreducible hypersurface in $(\PP^{13})^\vee=\PP^{13}$
given by a homogeneous polynomial $\phi_\ell$ of degree six on $\fg$ such that 
$\phi_\ell|_{\fh}=\prod_{\balpha\in\Delta_\ell} \balpha$.
Thus, the fourfold $V(h)=\Omega\cap h^\bot$ is singular if and only if $\PP h\in D_\ell$ where $\PP h\in\PP\fg$ is the  line in $\fg$ through $h$.

\item
\label{prop:summary:iii}
For a short root $\balpha\in\Delta_\s$ the orbit $\Omega_\s=\Ad(G).\PP \fg_\balpha\subset\PP^{13}$ 
does not depend on the choice of $\balpha\in\Delta_\s$.

\item
\label{prop:summary:iv} 
Let $D_\s\subset(\PP^{13})^\vee=\PP^{13}$ be the projective variety dual to the orbit closure $\overline{\Omega_\s}$. Then $D_\s$
is an irreducible hypersurface in $\PP^{13}$ 
given by a homogeneous polynomial $\phi_\s$ of degree six on $\fg$ such that 
$\phi_\s|_{\fh}=\prod_{\balpha\in\Delta_\s} \balpha$.
\end{enumerate}
\end{prop}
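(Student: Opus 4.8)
The plan is to deduce all four assertions from highest--weight orbit theory, the Chevalley restriction theorem, and the structural facts about duals of (co)adjoint varieties recorded in \cite[Theorem~8.25]{Tev05}; the only computational input I will supply is the restriction to $\fh$ of the two sextic discriminants, obtained from a short contact computation at the torus--fixed points. For part~\ref{prop:summary:i} I would invoke the standard description of the projectivized adjoint representation of a simple group: it has a unique closed orbit, the image of the minimal nilpotent orbit, which is the highest--root orbit $G\cdot\PP\fg_\theta$ and coincides with $\Omega=G/P$. The $T$--weight lines lying on $\Omega$ form the Weyl orbit of the highest weight $\theta$; since for $G_2$ the highest root $\theta$ is long and the Weyl group $W=N(T)/T$ permutes the roots of each given length transitively, these lines are exactly the $\PP\fg_\balpha$ with $\balpha\in\Delta_\ell$.

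For part~\ref{prop:summary:ii}, set $D_\ell=\Omega^\vee$. As $\Omega$ is smooth and irreducible its conormal variety is irreducible, so $D_\ell$ is irreducible, and by \cite[Theorem~8.25]{Tev05} it is a hypersurface; its reduced equation $\phi_\ell$ is $G$--invariant since $\Omega$ is $G$--stable and $G$ has no nontrivial characters. To compute $\phi_\ell|_\fh$ I would examine, for $h\in\fh$, the contact of $h^\bot$ with $\Omega$ at the fixed point $\PP\fg_\balpha$, $\balpha\in\Delta_\ell$. The affine tangent space to the cone over $\Omega$ at $e_\balpha$ is $\Bbbk e_\balpha\oplus\Bbbk h_\balpha\oplus\bigoplus_{\gamma:\,\gamma+\balpha\in\Delta}\fg_{\gamma+\balpha}$ with $h_\balpha=[e_\balpha,e_{-\balpha}]$; because $\kappa$ pairs $\fh$ trivially with every root space, $h^\bot$ contains this tangent space if and only if $\kappa(h,h_\balpha)=0$, i.e. if and only if $\balpha(h)=0$, using the identity $\kappa(h,h_\balpha)\propto\balpha(h)$. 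Hence each hyperplane $\{\balpha=0\}\subset\fh$ lies in $D_\ell$, so $\prod_{\balpha\in\Delta_\ell}\balpha$ divides $\phi_\ell|_\fh$. Conversely, since $\operatorname{Sing}V(h)$ is closed and $T$--stable in the complete variety $V(h)$, it meets the fixed locus (contained in $\{\PP\fg_\balpha:\balpha\in\Delta_\ell\}$) whenever nonempty, so for $h$ with $\balpha(h)\ne0$ for all $\balpha\in\Delta_\ell$ the section $V(h)$ is smooth and $\phi_\ell|_\fh\not\equiv0$. As Chevalley restriction is degree--preserving, $\deg\phi_\ell=\deg(\phi_\ell|_\fh)$; matching the degree $6$ of $\prod_{\balpha\in\Delta_\ell}\balpha$ forces $\phi_\ell|_\fh=c\prod_{\balpha\in\Delta_\ell}\balpha$. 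Finally $V(h)$ is singular exactly when $h^\bot$ is tangent to the smooth variety $\Omega$, which spans $\PP\fg$ and so is not contained in $h^\bot$; this is exactly the condition $\PP h\in D_\ell$.

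Part~\ref{prop:summary:iii} is immediate from the transitivity of $W=N(T)/T$ on $\Delta_\s$: lifting $W$ into $G$ through $N(T)$ places all short--root lines $\PP\fg_\balpha$ in a single $G$--orbit, so $\Omega_\s=\Ad(G).\PP\fg_\balpha$ is independent of $\balpha\in\Delta_\s$. For part~\ref{prop:summary:iv} I would repeat the argument of part~\ref{prop:summary:ii} with $\overline{\Omega_\s}$ in place of $\Omega$: by \cite[Theorem~8.25]{Tev05} the dual $D_\s=(\overline{\Omega_\s})^\vee$ is an irreducible sextic hypersurface with $G$--invariant equation $\phi_\s$. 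The short--root lines $\PP\fg_\balpha$ ($\balpha\in\Delta_\s$) are smooth points of $\overline{\Omega_\s}$, lying in the open orbit $\Omega_\s$, and the identical coroot computation shows $h^\bot$ is tangent there precisely when $\balpha(h)=0$. Thus $\prod_{\balpha\in\Delta_\s}\balpha$ divides $\phi_\s|_\fh$; as both are reduced of degree $6$ they agree up to a scalar.

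I expect the main obstacle to lie in the structural facts that $D_\ell$ and $D_\s$ are genuine hypersurfaces of the stated degree rather than varieties of higher codimension. For $D_\ell$ the degree is pinned down by the elementary torus--fixed--point computation above together with degree--preservation of Chevalley restriction, so there the reliance on \cite[Theorem~8.25]{Tev05} is light. For $D_\s$ it is more delicate, since $\overline{\Omega_\s}$ is singular along $\Omega$ and the Bertini/contact reasoning must be confined to its smooth locus; in particular the nonvanishing of $\phi_\s|_\fh$, i.e. that $\PP\fh$ is not contained in $D_\s$, is cleanest to read off from Tevelev's theorem rather than to reprove by hand. Everything else reduces to the single contact identity $\kappa(h,h_\balpha)\propto\balpha(h)$ and the degree count.
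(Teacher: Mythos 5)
The paper gives no argument for this proposition at all: it is stated with the bare pointer ``see \cite[Theorem~8.25]{Tev05}''. Your reconstruction is therefore by definition a different route, and in outline it is sound: uniqueness of the closed orbit in the projectivization of an irreducible representation plus transitivity of $\W$ on long roots gives \ref{prop:summary:i}; transitivity of $\W$ on short roots, lifted through $\N_G(T)$, gives \ref{prop:summary:iii}; your tangent-space formula $\Bbbk e_\balpha\oplus\Bbbk h_\balpha\oplus\bigoplus_{\gamma:\,\gamma+\balpha\in\Delta}\fg_{\gamma+\balpha}$, the identity $\kappa(h,h_\balpha)\propto\balpha(h)$, and the Borel-fixed-point argument (a nonempty closed $T$-stable singular locus of $V(h)$, $h\in\fh$, must contain one of the six $T$-fixed points $\PP\fg_\balpha$, $\balpha\in\Delta_\ell$) are all correct, and in fact are close in spirit to the computation the paper itself performs later in the proof of Lemma~\ref{lemma:intersections}. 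What your route buys is a localization of the reliance on \cite{Tev05} to the two structural inputs that $D_\ell$ and $D_\s$ are hypersurfaces of degree six, and a self-contained proof of the set-theoretic identity $D_\ell\cap\PP\fh=\bigcup_{\balpha\in\Delta_\ell^+}\PP(\ker\balpha)$, which the paper never derives independently.

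There is, however, one concrete flaw in the divisibility step, repeated in both \ref{prop:summary:ii} and \ref{prop:summary:iv}. Vanishing of $\phi_\ell|_\fh$ on the three lines $\ker\balpha\subset\fh$, $\balpha\in\Delta_\ell^+$, yields only divisibility by $\prod_{\balpha\in\Delta_\ell^+}\balpha$, of degree $3$: the forms $\balpha$ and $-\balpha$ are proportional and cut the same line, and $\prod_{\balpha\in\Delta_\ell}\balpha=-\bigl(\prod_{\balpha\in\Delta_\ell^+}\balpha\bigr)^2$, so divisibility by the full degree-$6$ product requires vanishing to order $2$ along each line, which your contact computation does not give. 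For the same reason your remark in \ref{prop:summary:iv} that $\prod_{\balpha\in\Delta_\s}\balpha$ and $\phi_\s|_\fh$ ``are both reduced of degree $6$'' is false --- the restriction is a square, hence non-reduced, even though $\phi_\s$ is reduced on $\fg$. The gap is repairable within your own framework for $D_\ell$: taking $\deg\phi_\ell=6$ from \cite{Tev05}, your two-sided identification of $D_\ell\cap\PP\fh$ with exactly the three points, together with the transitive $\W$-action on them forcing equal multiplicities $m$ with $3m=6$, yields $m=2$ and hence $\phi_\ell|_\fh=c\prod_{\balpha\in\Delta_\ell}\balpha$. (Alternatively, $m$ must be even because $\prod_{\balpha\in\Delta_\ell^+}\balpha$ transforms by a nontrivial character of $\W$, changing sign under reflections in long roots, while $\phi_\ell|_\fh$ is $\W$-invariant.) Consequently your closing claim that for $D_\ell$ the degree is ``pinned down'' by the fixed-point computation is overstated: that computation constrains $\deg\phi_\ell$ only to multiples of $3$, and the value $6$ still comes from the citation --- which is all the paper itself uses. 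For $D_\s$ you rightly flag that the singularities of $\overline{\Omega_\s}$ along $\Omega$ block the fixed-point exactness argument and defer to \cite{Tev05}; note only that the nonvanishing $\phi_\s|_\fh\not\equiv 0$ needs no citation, being immediate from the injectivity of the Chevalley restriction on $G$-invariants.
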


\begin{prop} 
\label{lem:classification-of-orbits} 
For the adjoint representation $\Ad(G)$ on $\PP\fg=\PP^{13}$ the following hold.
\begin{enumerate}
\item
\label{lem:classification-of-orbits:i} 
\textup(\cite[Corollary~2.1.13]{CMG93}\textup)
The complement $\PP\fg\setminus (D_\ell\cup D_\s)$ is the image in $\PP\fg$ of the 
set of regular semisimple elements of $\fg$, and $D_\ell\cap D_\s$ is the image of the nilpotent cone of $\fg$.
\item
\label{lem:classification-of-orbits:iv} 
Both $\Omega$ and $\Omega_{\s}$
are nilpotent orbits contained in $D_\ell\cap D_\s$. 
\item
\label{lem:classification-of-orbits:ii} 
\textup(\cite[Lemma~1]{KapustkaRanestad2013}\textup) $D_\s\setminus D_\ell$ is the union of two orbits 
$\Omega^{\aaa}$ of dimension $12$ and $\Omega^{\s\rrr}$ of dimension $10$ where
\begin{itemize}
\item
$\Omega^{\aaa}$ is a non-semisimple regular orbit open and dense in $D_\s\setminus D_\ell$ and 
\item
$\Omega^{\s\rrr}$ is a semisimple subregular orbit closed in $D_\s\setminus D_\ell$.
\end{itemize}
\item
\label{lem:classification-of-orbits:iii} 
\textup(\cite[Lemma~1]{KapustkaRanestad2013}\textup) Let $\cL=\langle D_\ell,D_\s\rangle$ 
be the pencil of sextic hypersurfaces in $\PP^{13}$ generated by $D_\ell$ and $D_\s$. 
Then for any member $D_t$ of $\cL$ different from $D_\ell$ and $D_\s$
the complement $D_t\setminus D_\ell$ is a regular semisimple orbit of $\Ad(G)$.
\end{enumerate}
\end{prop}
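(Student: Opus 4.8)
The plan is to quote parts \ref{lem:classification-of-orbits:i}, \ref{lem:classification-of-orbits:ii} and \ref{lem:classification-of-orbits:iii} from the sources \cite{CMG93, KapustkaRanestad2013} and to supply the short direct argument for the remaining claim (that $\Omega$ and $\Omega_\s$ are nilpotent orbits inside $D_\ell\cap D_\s$). The common backbone I would use is the invariant theory of $G=G_2$: by the Chevalley restriction theorem the algebra $\Bbbk[\fg]^{G}$ of $\Ad(G)$-invariants is isomorphic to $\Bbbk[\fh]^{W}$, a polynomial algebra on two homogeneous generators $I_2,I_6$ of degrees $2$ and $6$, and the value of any invariant at $x$ depends only on the semisimple part $x_{\s}$ of $x$. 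Moreover, by Proposition~\ref{prop:summary} one has $\phi_\ell|_\fh=\prod_{\balpha\in\Delta_\ell}\balpha$ and $\phi_\s|_\fh=\prod_{\balpha\in\Delta_\s}\balpha$, so on $\fh$ the product $\phi_\ell\phi_\s$ equals the discriminant $\prod_{\balpha\in\Delta}\balpha$.

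To obtain part \ref{lem:classification-of-orbits:i} I would argue as follows. An element $x$ is regular semisimple precisely when $x=x_{\s}$ and $x_{\s}$ is regular; since the centralizer of a regular semisimple element is a Cartan subalgebra, which contains no nonzero nilpotents, regularity of $x_{\s}$ already forces $x_{\s}=x$. Hence $x$ is regular semisimple exactly when $\phi_\ell(x)\phi_\s(x)=\phi_\ell(x_{\s})\phi_\s(x_{\s})\neq0$, i.e. exactly when $\PP x\notin D_\ell\cup D_\s$. Likewise $x$ is nilpotent iff $x_{\s}=0$; as the only common zero of $\phi_\ell|_\fh$ and $\phi_\s|_\fh$ in the two-dimensional space $\fh$ is the origin (a long-root wall and a short-root wall meet only at $0$), the common zero locus $\{\phi_\ell=\phi_\s=0\}$ is exactly the nilpotent cone, whose image is $D_\ell\cap D_\s$.

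Part \ref{lem:classification-of-orbits:iv} (the assertion that $\Omega$ and $\Omega_\s$ are nilpotent) is then immediate: $\Omega$ and $\Omega_\s$ are the $\Ad(G)$-orbits of the lines $\PP\fg_\balpha$ spanned by root vectors $e_\balpha$ for long, resp. short, roots, and each $e_\balpha$ is nilpotent because $\operatorname{ad}(e_\balpha)$ strictly raises the root grading; thus both orbits lie in the nilpotent cone, hence in $D_\ell\cap D_\s$. For part \ref{lem:classification-of-orbits:iii} I would observe that $\phi_\ell$ and $\phi_\s$ span the two-dimensional space $\langle I_2^{3},I_6\rangle$ of degree-$6$ invariants (they are independent since their restrictions to $\fh$ differ), so the pencil $\cL$ is $\{cI_2^{3}+dI_6=0\}$ with base locus the nilpotent cone $D_\ell\cap D_\s$. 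Off this base locus a member $D_t$ is a level set of the scale-invariant ratio $I_6/I_2^{3}$, a regular semisimple value once $D_t\neq D_\ell,D_\s$; by Kostant's results on the adjoint quotient $\fg\to\Spec\Bbbk[\fg]^{G}=\A^2$ the corresponding fiber is a single regular semisimple orbit, and since the rescaling $x\mapsto tx$ preserves the ratio and descends to $\PP\fg$, the image $D_t\setminus D_\ell$ is one $\Ad(G)$-orbit.

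Finally, part \ref{lem:classification-of-orbits:ii} is the subregular analysis of \cite{KapustkaRanestad2013}, which I would present as follows: a point of $D_\s\setminus D_\ell$ has $\phi_\s(x_{\s})=0\neq\phi_\ell(x_{\s})$, so $x_{\s}$ lies on a short wall only and its centralizer is reductive of rank two containing a short $\mathfrak{sl}_2$; the two possibilities $x=x_{\s}$ and $x\neq x_{\s}$ then produce exactly the closed subregular semisimple orbit $\Omega^{\s\rrr}$ and the open regular non-semisimple orbit $\Omega^{\aaa}$, of dimensions $10$ and $12$. I expect the main obstacle to be precisely this last orbit-by-orbit bookkeeping — confirming the orbit count, the dimensions and the closure relations — together with the careful invocation of Kostant's regular-fiber theorem needed in \ref{lem:classification-of-orbits:iii} to know that a generic pencil member is a single orbit rather than merely an orbit closure; by contrast, parts \ref{lem:classification-of-orbits:i} and \ref{lem:classification-of-orbits:iv} are formal consequences of the invariant theory recalled above.
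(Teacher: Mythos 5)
Your proposal is correct, and on the only assertion for which the paper supplies an argument at all --- the claim that $\Omega$ and $\Omega_{\s}$ are nilpotent orbits contained in $D_\ell\cap D_\s$ --- your proof coincides with the paper's: both observe that these are the $\Ad(G)$-orbits of root lines $\PP\fg_\balpha$, that root vectors are nilpotent, and that the inclusion in $D_\ell\cap D_\s$ then follows from the description of $D_\ell\cap D_\s$ as the image of the nilpotent cone. The genuine difference is one of scope: the paper treats the remaining three parts purely as citations to \cite[Corollary~2.1.13]{CMG93} and \cite[Lemma~1]{KapustkaRanestad2013}, whereas you reprove them via the Chevalley restriction theorem and Kostant's theorem on regular fibers of the adjoint quotient, which buys a self-contained treatment. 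Your key observations are sound: the degree-$6$ invariants form the two-dimensional space $\langle I_2^3,I_6\rangle$, so $\cL$ is exactly the pencil of invariant sextics; and for $D_t\neq D_\ell, D_\s$ every point of $D_t\setminus D_\ell$ is regular semisimple, since if additionally $\phi_\s(x)=0$ then two linearly independent combinations of $I_2^3,I_6$ vanish at $x$, forcing $\phi_\ell(x)=0$ --- a small lemma you gesture at but should state explicitly. Two further points to tighten if you write this up: the phrase ``level set of the ratio $I_6/I_2^3$'' degenerates on the pencil member $\{I_2^3=0\}$, so argue with the pencil member itself (equivalently, a point of the target $\PP^1$) rather than the affine ratio; and in both the pencil assertion and the analysis of $D_\s\setminus D_\ell$, single-orbit statements hold only after projectivization --- in $\fg$ the relevant loci are one-parameter families of $\Ad(G)$-orbits (e.g.\ the semisimple parts sweep out a punctured short wall of $\fh$), and it is the combination of the $\W$-action with rescaling, killed by passing to $\PP\fg$, that collapses them to the single orbits $\Omega^{\s\rrr}$ and $D_t\setminus D_\ell$. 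Since you in any case also cite \cite{KapustkaRanestad2013} for the orbit-by-orbit bookkeeping, these are refinements of exposition rather than gaps.
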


\begin{proof}
\ref{lem:classification-of-orbits:iv} Both $\Omega$ and $\Omega_{\s}$ 
are the orbits of root subspaces consisting of nilpotent elements, 
see Proposition~\ref{prop:summary}\ref{prop:summary:i},~\ref{prop:summary:iii}. 
The inclusion $\Omega\cup \Omega_{\s}\subset D_\ell\cap D_\s$ follows from~\ref{lem:classification-of-orbits:i}.
\end{proof}

\begin{prop}\textup(\cite[7.8.3-7.8.4 and Theorem~12.1]{PZ18}\textup)\label{prop:aut0}
Let $V=V(h)$ for $h\in\fg\setminus\{0\}$. Then 
\[
\Aut^0(V)\cong
\begin{cases}
\Gm^2&\text{if and only if $h^\bot\in (\PP\fg)^\vee\setminus (D_\ell\cup D_\s)$};
\\
\GL_2(\Bbbk)&\text{if and only if $\PP h\in \Omega^{\s\rrr}$};
\\
\Ga\times\Gm&\text{if and only if $\PP h\in \Omega^{\aaa}$}
\end{cases}
\]
where $\Omega^{\s\rrr}\cup\Omega^{\aaa}=D_\s\setminus D_\ell$, 
see Proposition~\xref{lem:classification-of-orbits} \xref{lem:classification-of-orbits:iii}.
\end{prop}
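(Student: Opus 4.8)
The plan is to identify $\Aut^0(V)$ with the connected centralizer of $h$ for the adjoint action and then to compute this centralizer orbit by orbit from the root data of \type{G_2}. First I would use the Mukai realization of~\ref{sit:Mukai-presentation}: since the rank-two bundle $\EEE$ is unique up to isomorphism, every automorphism of $V$ preserves $\EEE$, hence acts linearly on $W=H^0(V,\EEE)^\vee$ fixing the section $s\in\wedge^3W^\vee$ up to a scalar, and therefore belongs to $G=\Stab_{\GL(W)}(s)$ acting on $Z\cong\Omega$. Such an element must also preserve the hyperplane $h^\bot$ cutting $V$ out of $\Omega$, so $\Aut(V)=\Stab_G(\PP h)$, the stabilizer of the line $\Bbbk h\subset\fg$. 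Passing to neutral components, the multiplier character $g\mapsto\lambda_g$ defined by $\Ad(g)h=\lambda_g h$ has finite image: its values form a subgroup of $\Gm$, and since the semisimple part of $h$ is nonzero (as $\PP h\notin D_\ell$, so $h$ is not nilpotent), only finitely many scalars $\lambda$ satisfy $\lambda h\in\Ad(G)h$. Hence $\Aut^0(V)=\Stab_G(h)^0$ is exactly the connected centralizer of $h$.

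Next I would split into cases according to the $\Ad(G)$-orbit of $\PP h$, invoking Propositions~\ref{prop:summary} and~\ref{lem:classification-of-orbits}. As $V$ is smooth precisely when $\PP h\notin D_\ell$ by Proposition~\ref{prop:summary}\ref{prop:summary:ii}, the relevant orbits are the regular semisimple locus $\PP\fg\setminus(D_\ell\cup D_\s)$, the regular non-semisimple orbit $\Omega^{\aaa}$, and the subregular semisimple orbit $\Omega^{\s\rrr}$; the last two exhaust $D_\s\setminus D_\ell$ by Proposition~\ref{lem:classification-of-orbits}\ref{lem:classification-of-orbits:ii}. Since the centralizer depends only on the conjugacy class, it suffices to evaluate it on a single representative of each orbit.

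The centralizer computations then proceed from the \type{G_2} root decomposition. For a regular semisimple $h\in\fh$ no root vanishes on $h$, so its connected centralizer is the maximal torus $T\cong\Gm^2$. For a subregular semisimple representative $s$ exactly one pair $\pm\balpha$ of roots vanishes on $s$; adjoining the root groups $\fg_{\pm\balpha}$ to $T$ produces a reductive subgroup of rank $2$ and dimension $4$, which is the Levi subgroup $\Stab_G(s)^0\cong\GL_2(\Bbbk)$. Finally, for a regular non-semisimple representative I would use the Jordan decomposition $h=s+n$ with $s$ subregular semisimple and $n$ a nonzero nilpotent element of $\Lie\big(\Stab_G(s)^0\big)\cong\mathfrak{gl}_2$; here $n$ is a regular nilpotent of the $\mathfrak{sl}_2$-part, and its centralizer in $\GL_2(\Bbbk)$ is the product of the central torus with the one-dimensional unipotent centralizer, giving $\Stab_G(h)^0\cong\Ga\times\Gm$. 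The dimension counts $14-4=10$ for $\Omega^{\s\rrr}$ and $14-2=12$ for $\Omega^{\aaa}$ confirm these identifications.

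I expect the main obstacle to be the bookkeeping of the long/short dichotomy specific to \type{G_2}. One must verify that the subregular semisimple and the regular non-semisimple representatives can be chosen with $\PP h\in D_\s\setminus D_\ell$ and not in $D_\ell$, pin down which $A_1$-Levi (attached to a long or to a short root) realizes the orbits $\Omega^{\s\rrr}$ and $\Omega^{\aaa}$ associated with $D_\s$, and confirm that the reductive centralizer is the full $\GL_2(\Bbbk)$ rather than an isogenous form. In practice this amounts to choosing explicit root vectors and evaluating the sextic invariants $\phi_\ell$ and $\phi_\s$ of Proposition~\ref{prop:summary} on the representatives, which is the computation carried out in \cite{PZ18}.
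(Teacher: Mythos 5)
Your proposal is correct and takes essentially the same route as the source this paper cites: the paper does not reprove Proposition~\ref{prop:aut0} (it is imported from \cite[7.8.3--7.8.4 and Theorem~12.1]{PZ18}), and your argument reconstructs that proof using exactly the machinery the present paper develops elsewhere --- the Mukai-bundle uniqueness giving $\Aut(V)=\Stab_G(\PP h)$ (Proposition~\ref{prop:ix} and Corollary~\ref{cor:new}), the finiteness of the multiplier character reducing $\Aut^0(V)$ to the connected centralizer, and the orbit-by-orbit centralizer computation from the \type{G_2} root data. The two points you flag (that the short-root Levi is the full $\GL_2(\Bbbk)$ rather than an isogenous form, and the long/short bookkeeping via the invariants $\phi_\ell,\phi_\s$ to match $\Omega^{\s\rrr}$, $\Omega^{\aaa}$ with $D_\s\setminus D_\ell$) are precisely the computations carried out in \cite{PZ18}, so your outline is sound as it stands.
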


\begin{prop}
\label{prop:Omega} 
$\,$
\begin{enumerate}\item
\label{lem:aut-omega}
One has $\Aut(\Omega)=G$.
\item
\label{fixed-points} 
Given a maximal torus $T$ of $G$, the $T$-action on $\Omega$ 
has exactly six $T$-fixed points. These are the points $\PP \fg_\balpha\in\Omega$ corresponding to the long roots 
$\balpha\in\Delta_\ell$, see Proposition~\xref{prop:summary}\xref{prop:summary:i}.
\end{enumerate}
\end{prop}

\begin{proof}
\ref{lem:aut-omega} follows from ~\cite[Section 3.3, Theorems 2 and 3]{Akh95}.

\ref{fixed-points} The $T$-fixed point set in $\PP\fg=\PP^{13}$ is the projectivization 
of the union of eigenspaces of $T$ acting on $\PP\fg$. Due to the Cartan decomposition,
this set consists of the projective line 
$\PP\fh$ and the twelve isolated points $\PP\fg_\balpha$ where $\balpha\in\Delta_\ell\cup\Delta_\s$.
The six fixed points $\PP\fg_\balpha$ which correspond to the short roots $\balpha\in\Delta_\s$ 
lie on the nilpotent orbit $\Omega_\s\neq\Omega$, see Proposition~\ref{lem:classification-of-orbits}\ref{lem:classification-of-orbits:iv}. 
The projective line $\PP\fh$ parameterizes the semisimple orbits. Hence, $\PP\fh\cap\Omega=\emptyset$, cf.~\cite[Theorem~2.2.4]{CMG93}. 
The remaining six fixed points $\PP\fg_\balpha$, $\balpha\in\Delta_\ell$, are points of $\Omega$, 
see Proposition~\ref{prop:summary}\ref{prop:summary:i} and~\ref{prop:summary:iii}.
\end{proof}

\subsection{Cubic cones}
\label{sit:Cubic cones}
The Fano--Mukai fourfolds $V=V_{18}$ are classified in three types according to the group $\Aut^0(V)$. The latter
algebraic group is isomorphic to one of the following groups:
\begin{equation}
\label{aeq:Aut0}
\Gm^2,\qquad \Ga\times\Gm,\qquad \GL_2(\Bbbk).
\end{equation} 
This classification reflects the geometry of $V$, namely, the number of cubic cones on $V$. 
Following~\cite{KapustkaRanestad2013} we call a \emph{cubic cone} 
the cone over a rational twisted cubic curve. 

\begin{lem}[{\cite[Lemma~1.8]{PZ20}}, {\cite[Section~12]{PZ18}}] 
\label{lem:cubic-cones:2} 
$\,$
\begin{enumerate} 
\item
\label{lem:cubic-cones:2:cycle} 
If $\Aut^0(V)=\Gm^2$ then $V$ contains exactly $6$ cubic cones. 
These cones form a cycle in which the neighbors are the cones with a common ruling, 
and the pairs of opposite members
of the cycle correspond to the pairs of disjoint cubic cones.
\item
\label{lem:cubic-cones:2:iii} 
If $\Aut^0(V)=\Ga\times\Gm$ then $V$ contains exactly $4$ cubic cones. 
These cones form a chain in which the neighbors are the cones with a common ruling,
and the pair of end vertices
corresponds to the unique pair of disjoint cones.
\item
\label{lem:cubic-cones:2:i} 
If $\Aut^0(V)=\GL_2(\Bbbk)$ then the Hilbert scheme of cubic cones in $V$ 
is a disjoint union of two projective lines and of two isolated points. 
The cubic cones represented by these isolated points are disjoint, 
and this is the only pair of $\Aut^0(V)$-invariant cubic cones on $V$.
\end{enumerate}
\end{lem}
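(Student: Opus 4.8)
The plan is to work entirely within the Mukai realization $V=V(h)=\Omega\cap h^\bot$ of Subsection~\ref{ss:Mukai} and to reduce the enumeration of cubic cones to root-theoretic data for $\fg=\fg_2$. The first step is to understand the cubic cones on $\Omega$ itself. Since $\Aut(\Omega)=G$ by Proposition~\ref{prop:Omega}, the cubic cones on $\Omega$ form a finite union of $G$-orbits, and I would identify this family explicitly, pin down the vertex of each cone (a well-defined point of $\Omega$, being the unique singular point of the cone), and describe its rulings as lines on $\Omega$ through that vertex. The lines on $\Omega$ emanating from a point, and in particular those joining two $T$-fixed points, are governed by the root strings in $\Delta$; this dictionary between rulings and root directions is the basic tool for everything that follows. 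A cubic cone on $V$ is then precisely a cubic cone on $\Omega$ contained in the hyperplane $h^\bot$.

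For assertion~\ref{lem:cubic-cones:2:cycle} one may take $h\in\fh$ regular semisimple, so that $\Aut^0(V)=T=\Gm^2$ by Proposition~\ref{prop:aut0}. Since $\fh\bot\fg_\balpha$ for every root $\balpha$, the hyperplane $h^\bot$ contains the whole span $\moplus_\balpha\fg_\balpha$ of the root spaces; in particular all six long-root points $\PP\fg_\balpha$ ($\balpha\in\Delta_\ell$) lie on $V$, and by Proposition~\ref{prop:Omega}\ref{fixed-points} these are exactly the $T$-fixed points of $V$. As the set of cubic cones is finite and $T$ is connected, every cubic cone on $V$ is $T$-stable, whence its vertex is a $T$-fixed point, i.e.\ one of the six long-root points. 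Because $h^\bot$ already contains every root space, the count of cubic cones on $V$ equals the number of $T$-invariant cubic cones on $\Omega$. I would then show that each long-root point is the vertex of exactly one such cone, and that the six long roots forming an $A_2$-hexagon controls the incidences: two cones share a ruling precisely when their vertices $\PP\fg_\balpha,\PP\fg_\beta$ are joined by a line of $\Omega$ lying in $V$, which happens for neighbouring vertices of the hexagon, while opposite vertices give disjoint cones. This yields the $6$-cycle with the stated adjacency and the three opposite pairs of disjoint cones.

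Assertions~\ref{lem:cubic-cones:2:iii} and~\ref{lem:cubic-cones:2:i} I would obtain by degeneration inside the one-parameter family, specialising $\PP h$ to $\Omega^{\aaa}$ and to $\Omega^{\s\rrr}$ respectively (Proposition~\ref{prop:aut0}). In case~\ref{lem:cubic-cones:2:iii} the extra one-parameter unipotent subgroup $\Ga\subset\Aut^0(V)$ acts on the still finite set of cubic cones and must fix each of them; tracking the flat limits of the six cones of the generic member, two pairs of neighbouring cones collide as $\PP h\to\Omega^{\aaa}$, so the hexagonal cycle degenerates to a chain of four cones, the two ends forming the unique disjoint pair. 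In case~\ref{lem:cubic-cones:2:i}, where $\Aut^0(V)=\GL_2(\Bbbk)$, the torus $T\subset\GL_2(\Bbbk)$ still fixes the finitely many $T$-invariant cones, but the full group now moves cones in positive-dimensional orbits: the $\GL_2(\Bbbk)$-translates of a generic cubic cone sweep out the two components of the Hilbert scheme, each an orbit isomorphic to $\GL_2(\Bbbk)/B\cong\PP^1$, whereas the two cones whose vertices are the $\GL_2(\Bbbk)$-fixed points remain isolated and constitute the unique invariant disjoint pair.

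The main obstacle is the exact bookkeeping in assertion~\ref{lem:cubic-cones:2:cycle}: proving that each long-root point is the vertex of exactly one cubic cone of $V$, that there are no further cubic cones, and then computing the adjacency and disjointness relations. This rests on an explicit analysis in $\fg_2$ of the lines on $\Omega$ through a long-root point and of which of them remain in $h^\bot$, i.e.\ on the root-string combinatorics of the $A_2$-hexagon of long roots together with its interaction with the short roots. Once this local model at a fixed point is secured, the count and the cycle structure follow formally, and the two special cases are then handled by passing to the flat limit in the Hilbert scheme and using the enlarged symmetry group to organise the limiting cones.
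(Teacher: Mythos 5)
First, a point of order: the paper does not prove this lemma at all --- it is imported verbatim from \cite[Lemma~1.8]{PZ20} and \cite[Section~12]{PZ18}, where it is established by a detailed study of the Hilbert schemes of lines and cubic cones on $V$. So your proposal cannot be compared to an in-paper argument; it must stand on its own, and as it stands it has genuine gaps. The most basic one is circularity in assertion~\ref{lem:cubic-cones:2:cycle}: you deduce that every cubic cone on $V$ is $T$-stable (hence has its vertex at one of the six $T$-fixed points) from the finiteness of the set of cubic cones --- but finiteness, indeed the exact count $6$, is precisely what is to be proved. A priori the Hilbert scheme of cubic cones could have positive-dimensional components consisting of non-invariant cones (this actually happens in case~\ref{lem:cubic-cones:2:i}), so $T$-invariance cannot be assumed. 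Relatedly, your identity ``cubic cones on $V$ $=$ $T$-invariant cubic cones on $\Omega$'' needs the linear span of each invariant cone (a $\PP^4$) to be a sum of root spaces, with no $\fh$-component; otherwise containment in $h^\bot$ depends on $h$, since $h^\bot\cap\fh$ is a single line. You flag the core verifications --- exactly one cone per long-root vertex, no further cones, neighbours sharing a ruling, opposite cones disjoint --- as ``the main obstacle'' and leave them undone; but that root-string analysis in $\fg_2$ \emph{is} the content of the lemma, so what you have is a plan, not a proof.

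The degeneration argument for assertions~\ref{lem:cubic-cones:2:iii} and~\ref{lem:cubic-cones:2:i} fails as a counting device. Specializing $\PP h$ into $\Omega^{\aaa}$ or $\Omega^{\s\rrr}$ and taking flat limits of the six cones of the general member gives, at best, \emph{lower} bounds: the Hilbert scheme of the special fibre may acquire components not arising as limits, and the limits themselves may be non-reduced, may collide, or may fail to be cones. Your own case~\ref{lem:cubic-cones:2:i} demonstrates the danger: there the Hilbert scheme jumps from six points to two copies of $\PP^1$ plus two points, which no tracking of six flat limits can detect; and the same mechanism could a priori inject extra cones in case~\ref{lem:cubic-cones:2:iii}, where you assert ``exactly $4$'' after two collisions without excluding new components or justifying which pairs collide. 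The orbit description $\GL_2(\Bbbk)/B\cong\PP^1$ for the two one-parameter families is likewise asserted, not derived. To repair the proof along your lines you would need (a) an a priori finiteness or classification statement for cubic cones on $V$ obtained directly from the geometry (as in \cite[Section~12]{PZ18}, via lines on $V$ and the double projection from a cone), carried out separately in each of the three cases, and (b) the explicit local model at a long-root vertex that you postponed; without these, none of the three exact counts is established.
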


\section{Automorphism groups of Fano--Mukai fourfolds of genus $10$}
\label{sec:aut}
In this section we provide a proof of Theorem~\ref{thm:aut0} from the Introduction. 
In \cite[Theorem~1.3(i),(ii)]{PZ18} we established already that
\[
\Aut(V^{\s}_{18})=\GL_2(\Bbbk)\rtimes \ZZ/2\ZZ\quad\text{and}\quad \Aut(V^{\aaa}_{18})=(\Ga\times\Gm)\rtimes \ZZ/2\ZZ.
\] 
Moreover, these varieties are unique up to isomorphism.
The generator of $\ZZ/2\ZZ$ acts on $\Aut^0(V)$ via the involution $g\mapsto ({g^{\rt}})^{-1}$ 
in the first case and via $g\mapsto g^{-1}$ in the second. In the general case, that is, for $V=V^{\g}_{18}$, we have $\Aut^0(V)=\Gm^2$ and
\begin{equation*}
\Aut(V)\subset \Gm^2\rtimes \ZZ/6\ZZ,
\end{equation*}
see \cite[Theorem~1.3(iii) and Lemma 11.4]{PZ18}.
	The following Theorem~\ref{mthm:2} 
	completes these results and gives a proof of Theorem~\ref{thm:aut0}\ref{thm:aut0:i}-\ref{thm:aut0:ii}; 
	see Corollary~\ref{cor:new} for the proof of Theorem~\ref{thm:aut0}\ref{thm:aut0:iv}.

\begin{thm}
\label{mthm:2}
\label{thm:aut} $\,$
\begin{enumerate}
\item
\label{mthm:2:i} 
There exists a unique, up to isomorphism, Fano--Mukai fourfold $V^{\rrr}$ of genus $10$ with
$\Aut(V^{\rrr})=\Gm^2\rtimes \ZZ/6\ZZ$. \footnote{This answers a question in~\cite[Remark~11.5.2]{PZ18}.}
Any generator of $\ZZ/6\ZZ$ acts on $\Gm^2$ via an automorphism of order $6$ 
from $\Aut(\Gm^2)\cong\GL_2(\ZZ)$. 

\item
\label{mthm:2:ii} 
For any Fano--Mukai fourfold $V$ 
of genus $10$
with $\Aut^0(V)=\Gm^2$ non-isomorphic to
$V^{\rrr}$ one has $\Aut(V)=\Gm^2\rtimes \ZZ/2\ZZ$. 
The generator of $\ZZ/2\ZZ$ acts on $\Gm^2$ via $g\mapsto g^{-1}$.
\end{enumerate}
\end{thm}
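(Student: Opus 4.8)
The plan is to realize every $V$ in the general case as a hyperplane section $V=V(h)=\Omega\cap h^\bot$ with $h\in\fh$ regular semisimple, and to compute the component group $\Aut(V)/\Aut^0(V)$ inside the Weyl group $\W=\N_G(T)/T\cong D_6$ of $G_2$. By Proposition~\ref{prop:aut0} together with Proposition~\ref{lem:classification-of-orbits}\ref{lem:classification-of-orbits:i}, the condition $\Aut^0(V)=\Gm^2$ is equivalent to $h$ being regular semisimple, so after conjugation we may assume $h\in\fh=\Lie(T)$ lies off every reflection wall. Since $\Aut^0(V)=T$ is characteristic, $\Aut(V)$ normalizes $T$; combined with the already established inclusion $\Aut(V)\subset\Gm^2\rtimes\ZZ/6\ZZ$ of \cite{PZ18}, the component group $\Aut(V)/T$ is a subgroup of the cyclic rotation subgroup $C_6=\langle r\rangle\subset\W$ generated by a Coxeter element $r$ of order $6$. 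The basic mechanism is that a representative $n\in\N_G(T)$ of $w\in\W$ satisfies $n\cdot V(h)=V(w\cdot h)$, and $V(h')=V(h)$ precisely when $\PP h'=\PP h$; hence $w$ induces an automorphism of $V$ iff $w\cdot h\in\Bbbk h$, i.e. iff $h$ is an eigenvector of $w$.

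First I would extract the universal $\ZZ/2\ZZ$: the element $-1=r^3\in\W$ sends $h\mapsto -h\in\Bbbk h$ for every $h$, so it always descends to an automorphism of $V$, acting on $T$ by $g\mapsto g^{-1}$. Thus $\langle r^3\rangle\cong\ZZ/2\ZZ\subseteq\Aut(V)/T$ unconditionally. Because the subgroups of $C_6$ containing $\langle r^3\rangle$ are only $\langle r^3\rangle$ and $C_6$ itself, the component group is either $\ZZ/2\ZZ$ or $\ZZ/6\ZZ$, and it equals $\ZZ/6\ZZ$ exactly when some element of order $3$, equivalently $r$ itself, stabilizes the line $\Bbbk h$.

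Next I would analyze when $\Bbbk h$ is $r$-stable. Over $\Bbbk$ the Coxeter element $r$ is diagonalizable on $\fh$ with the two distinct eigenvalues $\zeta_6^{\pm1}$ (the exponents of $G_2$ being $1,5$), hence with exactly two eigenlines $\ell_+,\ell_-\subset\fh$. By Springer's theory of regular elements a Coxeter element is regular, so its eigenvectors lie off all reflection walls; therefore any $h\in\ell_+\cup\ell_-$ is regular semisimple, and the corresponding $V^{\rrr}=V(h)$ has $\Aut^0(V^{\rrr})=\Gm^2$ and $\Aut(V^{\rrr})/T\supseteq C_6$, whence $\Aut(V^{\rrr})=\Gm^2\rtimes\ZZ/6\ZZ$ with $r$ acting as an order-$6$ element of $\Aut(\Gm^2)\cong\GL_2(\ZZ)$. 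This proves \ref{mthm:2:i} up to uniqueness. For \ref{mthm:2:ii}, if $\PP h\notin\{\ell_+,\ell_-\}$ then $h$ is not an eigenvector of $r$ (the powers $r,r^2,r^4,r^5$ share the same eigenlines), so no element of order $3$ survives and the component group collapses to $\langle r^3\rangle\cong\ZZ/2\ZZ$, acting by $g\mapsto g^{-1}$.

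Finally, for uniqueness in \ref{mthm:2:i} I would pass to moduli: two regular $V(h),V(h')$ are isomorphic iff $\PP h,\PP h'$ lie in one $G$-orbit, iff they are $\W$-equivalent in $\PP\fh\cong\PP^1$. The action on $\PP\fh$ factors through $\bar{\W}=\W/\{\pm1\}\cong S_3$, and the two eigenlines $\ell_+,\ell_-$ are precisely the two fixed points on $\PP^1$ of the order-$3$ image $\bar{r}$; a reflection in $\W$ conjugates $r$ to $r^{-1}$ and therefore interchanges $\ell_+$ and $\ell_-$. Hence $\ell_+,\ell_-$ form a single $\W$-orbit and determine a unique isomorphism class $V^{\rrr}$. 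The main obstacle is the regularity input in the order-$6$ step: one must be sure the Coxeter eigenline avoids $D_\ell\cup D_\s$ so that $\Aut^0$ does not jump---this is exactly Springer regularity, which can alternatively be checked by an explicit $G_2$ root-system computation---and then match the abstract order-$6$ symmetry with the cyclic permutation of the hexagon of six cubic cones of Lemma~\ref{lem:cubic-cones:2}\ref{lem:cubic-cones:2:cycle} to confirm it is realized geometrically.
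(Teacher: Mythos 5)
Your proposal is correct and follows the paper's strategy in all essentials: reduce to $V=V(h)$ with $h\in\fh$ regular semisimple (your route through Proposition~\ref{prop:aut0} and Proposition~\ref{lem:classification-of-orbits}\ref{lem:classification-of-orbits:i} is equivalent to the paper's Proposition~\ref{lemma:max-torus}\ref{lemma:max-torus:ii} and Corollary~\ref{cor:max-torus-new}), identify the component group with the stabilizer of $\PP h$ for the $\W$-action on $\PP\fh$, locate the two exceptional lines, and deduce uniqueness of $V^{\rrr}$ from the fact that they form a single $\W$-orbit. The implementation differs at two steps. For the upper bound on the component group you import $\Aut(V)\subset\Gm^2\rtimes\ZZ/6\ZZ$ from \cite{PZ18}, whereas the paper is self-contained here: Lemma~\ref{lemma:stabilizer} gives $\Aut(V)=G_{h,\fh}\subset T\rtimes\W$ and Lemma~\ref{lemma:stabilizer-S3} (Riemann--Hurwitz for the effective $\Sym_3$-action on $\PP\fh\cong\PP^1$) lists all orbits with nontrivial stabilizer. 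For the exceptional lines you take eigenlines of a Coxeter element and invoke Springer's regularity theorem to check they avoid $D_\ell\cup D_\s$; the paper instead identifies $\Orb_\rrr$ as the zero locus of the Killing form on $\fh$ (Lemma~\ref{lemma:intersections}), with the avoidance of $\Orb_\ell\cup\Orb_\s$ automatic from the branch-point count. The two descriptions agree --- eigenvectors of an order-$6$ rotation are isotropic for the invariant form --- and Springer's theorem is heavier machinery than needed, since anisotropy of the Killing form on the rational span of the roots already suffices.

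One ingredient you use only implicitly and must cite: both the ``only if'' direction of your mechanism ($w$ induces an automorphism of $V$ only if $w\cdot h\in\Bbbk h$) and your isomorphism criterion ($V(h)\cong V(h')$ iff $\PP h$, $\PP h'$ lie in one $G$-orbit) presuppose that every automorphism or isomorphism of hyperplane sections of $\Omega$ is induced by $\Ad(G)$, i.e.\ $\Aut(V)=\Stab_G(\PP h)$. This is exactly Proposition~\ref{prop:ix} and Corollary~\ref{cor:new}, proven via the uniqueness of the Mukai vector bundle, and it does not follow from the abstract inclusion $\Aut(V)\subset\Gm^2\rtimes\ZZ/6\ZZ$: without it, an abstract automorphism of $V$ of order $3$ modulo $T$ is not a priori represented in $\N_G(T)$, so neither the generic collapse to $\ZZ/2\ZZ$ nor the uniqueness of $V^{\rrr}$ is justified. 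With that reference added (the paper channels it through Lemma~\ref{lemma:stabilizer}), your argument closes.
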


The proof of Theorem~\ref{mthm:2} 
is postponed until subsection~\ref{ss:mthm:2}. 

\subsection{Proof of Theorem~\ref{thm:aut0}\ref{thm:aut0:iv}}

The following proposition is an analog of \cite[Theorem~0.9]{Muk88} 
where a similar fact is stated for Fano 3-folds of degree 18 with Picard number one.

\begin{prop}
\label{prop:ix}
Let $V$ and $V'$ be two Fano--Mukai fourfolds of genus $10$ which are realized as hyperplane sections of 
$\Omega\subset \PP^{13}$. If $V\cong V'$ 
then $V$ and $V'$ are equivalent under the adjoint action of $G$ on $\Omega$. Moreover, any isomorphism 
$V\stackrel{\cong}{\longrightarrow} V'$ admits an extension to an automorphism of $\Omega$ from $\Ad(G)|_{\Omega}$. 
\end{prop}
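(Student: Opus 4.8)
The plan is to reduce the assertion to the rigidity of the Mukai vector bundle $\EEE$ and then to the structure of $\Omega$ as a $G$-variety. First I would recall from \ref{sit:Mukai-presentation} that on each Fano-Mukai fourfold of genus $10$ the rank-two bundle $\EEE$ is the unique stable bundle generated by global sections with $\rc_1(\EEE)$ an ample generator of $\Pic(V)$ and $\dim H^0(V,\EEE)=7$. The key point is that this uniqueness forces any isomorphism $f\colon V\xrightarrow{\cong}V'$ to respect the Mukai bundles, i.e.\ $f^*\EEE'\cong\EEE$, since $f^*\EEE'$ satisfies all three defining properties on $V$. Passing to spaces of sections, $f$ then induces a linear isomorphism $W=H^0(V,\EEE)^\vee\xrightarrow{\cong}H^0(V',\EEE')^\vee=W'$, well defined up to scalar.

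Next I would track this linear map through the Grassmannian realization \eqref{eq:Z}. Because $\EEE=i^*\UUU^\vee$ and the embedding $i\colon V\hookrightarrow\Gr(2,W)$ is functorial in the pair $(V,\EEE)$, the induced isomorphism $W\xrightarrow{\cong}W'$ carries the Plücker embedding of $\Gr(2,W)$ to that of $\Gr(2,W')$ and intertwines the two embeddings $i,i'$ with $f$. The section $s\in\wedge^3W^\vee$ cutting out $Z\cong\Omega$ is characterized intrinsically as the generator of $\ker\Psi$, so the linear map sends $s$ to a scalar multiple of $s'$; hence it carries $Z$ isomorphically to $Z'$ and restricts on the hyperplane sections to the given $f$. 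Identifying both copies of $\Omega$ via this data, $f$ becomes an automorphism of $\Omega$ extending the original isomorphism.

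It then remains to see that this automorphism lies in $\Ad(G)|_\Omega$. Here I would invoke Proposition~\ref{prop:Omega}\ref{lem:aut-omega}, namely $\Aut(\Omega)=G$, together with the identification of the $G$-action on $\Omega$ with the adjoint action coming from the stabilizer of $s$ in $\GL(W)$. Since the linear automorphism we produced preserves $s$ up to scalar, after rescaling it lies in the stabilizer $G$ of $s$, so its restriction to $\Omega$ is an element of $\Ad(G)|_\Omega$. This simultaneously gives both conclusions: $V$ and $V'$ are $\Ad(G)$-equivalent hyperplane sections, and the isomorphism $f$ is the restriction of an element of $\Ad(G)|_\Omega$.

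The main obstacle I anticipate is the compatibility bookkeeping in the middle step: one must check carefully that the abstract uniqueness $f^*\EEE'\cong\EEE$ upgrades to a linear isomorphism that genuinely intertwines the two Grassmannian embeddings and the two sections $s,s'$, rather than merely matching the bundles abstractly. The subtlety is that $i$ is determined by $\EEE$ only up to the automorphisms of $(V,\EEE)$, so one should verify that the resulting ambiguity is absorbed into a scalar on $W$ (equivalently, that $\Aut(\EEE)=\Bbbk^\times$ by stability), ensuring that the induced map on $\wedge^3W^\vee$ is well defined up to scalar and hence the image of $s$ is forced. Once this is pinned down, the passage $\GL(W)\supset\Stab(s)=G$ and Proposition~\ref{prop:Omega}\ref{lem:aut-omega} close the argument.
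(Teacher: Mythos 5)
Your proposal is correct and takes essentially the same route as the paper's proof: uniqueness (and simplicity, by stability) of the Mukai bundle gives $\varphi^*\EEE'\cong\EEE$, hence an induced linear isomorphism $W\cong W'$ and an isomorphism of the triples $(\Gr(2,W),Z,i(V))\cong(\Gr(2,W'),Z',i'(V'))$, after which the conclusion follows from $\Aut(\Omega)=G$ (Proposition~\ref{prop:Omega}\ref{lem:aut-omega}). The only difference is cosmetic: where you rescale the linear map into the stabilizer of $s$ in $\GL(W)$, the paper passes directly to the induced automorphism of $\Omega$ and invokes $\Aut(\Omega)=G$; your remark that the ambiguity is a scalar on $W$ is precisely the bookkeeping implicit in the paper's appeal to the uniqueness of $\EEE$.
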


\begin{proof} 
Fix an isomorphism $\varphi\colon V\stackrel{\cong}{\longrightarrow} V'$. Let $\EEE$ 
($\EEE'$, respectively) 
be the Mukai vector bundle on $V$ (on $V'$, respectively). Letting $W=H^0(V,\EEE)^\vee$ and $W'=H^0(V',\EEE')^\vee$ consider the corresponding embeddings 
\[
i: V \hookrightarrow Z\hookrightarrow\Gr(2,W),\quad\text{resp.}\quad i': V' \hookrightarrow Z'\hookrightarrow\Gr(2,W')
\] 
where $Z\cong Z'\cong\Omega$, see~\eqref{eq:Z}. 
The Mukai vector bundles $\varphi^*\EEE$ and $\EEE'$ 
over $V'$ are isomorphic, cf.~\cite[Proposition~5.1]{Muk88} and \cite[Proposition~B.1.5]{Kuznetsov-Prokhorov-Shramov}. 
Hence,
we may suppose $\EEE'=\varphi^*\EEE$. 
Under this assumption, $\varphi$ induces isomorphisms
\begin{itemize}
\item
$W\cong_{\Bbbk} W'$;
\item
$(\Gr(2,W),Z,i(V))\cong (\Gr(2,W'), Z',i'(V'))$.
\end{itemize}
Choosing isomorphisms $Z\cong\Omega$ and $Z'\cong\Omega$ we obtain embeddings
$j\colon V\hookrightarrow\Omega$ and $j'\colon V'\hookrightarrow\Omega$ and an automorphism of $\Omega$ 
which sends $j(V)$ onto $j'(V')$. Now the assertion follows from the equality
$\Aut(\Omega)=G$, see Proposition~\ref{prop:Omega}\ref{lem:aut-omega}.
\end{proof}

Combining this with the existence of a Fano--Mukai presentation~\eqref{sit:Mukai-presentation}
we deduce the following corollary.

\begin{scor}
\label{cor:orbits-isom.classes}
There is a bijection between the isomorphism classes 
of the Fano--Mukai fourfolds of genus $10$ and 
the $G$-orbits in $(\PP^{13})^\vee\setminus D_\ell$.
\end{scor}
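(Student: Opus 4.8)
The plan is to read off the bijection from the Mukai presentation together with Proposition~\ref{prop:ix}. Using the Killing-form identification of $\PP\fg$ with $(\PP^{13})^\vee$, a point of $(\PP^{13})^\vee$ is a hyperplane $H\subset\PP^{13}$, and I would assign to it the isomorphism class of the section $\Omega\cap H$. By Proposition~\ref{prop:summary}\ref{prop:summary:ii} this section is smooth exactly when $H\notin D_\ell$, and a smooth hyperplane section of $\Omega$ is precisely a Fano-Mukai fourfold of genus $10$ (this is the content of the Mukai realization in~\ref{ss:Mukai}); hence $H\mapsto[\Omega\cap H]$ is defined on all of $(\PP^{13})^\vee\setminus D_\ell$. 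Since $G=\Aut(\Omega)$ by Proposition~\ref{prop:Omega}\ref{lem:aut-omega} acts on $\PP^{13}=\PP\fg$ through the projective adjoint representation and preserves $\Omega$, for $g\in G$ one has $g(\Omega\cap H)=\Omega\cap g(H)$, so $g$ restricts to an isomorphism $\Omega\cap H\stackrel{\cong}{\longrightarrow}\Omega\cap g(H)$. Thus the assignment is constant on $G$-orbits and descends to a well-defined map $\Phi$ from the set of $G$-orbits in $(\PP^{13})^\vee\setminus D_\ell$ to the set of isomorphism classes.

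Surjectivity of $\Phi$ is immediate from the existence of a Fano-Mukai presentation~\eqref{sit:Mukai-presentation}: every Fano-Mukai fourfold $V$ of genus $10$ is isomorphic to some $\Omega\cap H$, and its smoothness forces $H\notin D_\ell$ by Proposition~\ref{prop:summary}\ref{prop:summary:ii}.

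For injectivity I would take $H,H'\in(\PP^{13})^\vee\setminus D_\ell$ with $\Omega\cap H\cong\Omega\cap H'$. By Proposition~\ref{prop:ix} the isomorphism extends to an element $g\in\Ad(G)|_\Omega$, so $g(\Omega\cap H)=\Omega\cap H'$; since $g$ acts on $\PP^{13}$ by a projective-linear transformation and fixes $\Omega$, this reads $\Omega\cap g(H)=\Omega\cap H'$. The remaining, and main, point is to recover the hyperplane from its section. The variety $\Omega$ is nondegenerate in $\PP^{13}$, and a smooth hyperplane section of a nondegenerate projective variety of dimension at least two is again nondegenerate in its hyperplane; applying this to $\Omega\cap H'$ (here $\dim\Omega=5$) shows that $\Omega\cap H'$ spans $H'$. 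Were $g(H)\neq H'$, the common section would be contained in $g(H)\cap H'$, a linear subspace of codimension two, contradicting nondegeneracy; hence $g(H)=H'$ and $H,H'$ lie in one $G$-orbit. This nondegeneracy of the smooth section is the step to check with care; once it is in place, $\Phi$ is injective, and together with surjectivity this yields the claimed bijection.
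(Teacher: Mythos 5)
Your proposal is correct and takes essentially the same route as the paper: surjectivity via the Mukai presentation \eqref{sit:Mukai-presentation}, smoothness exactly off $D_\ell$ via Proposition~\ref{prop:summary}\ref{prop:summary:ii}, and injectivity via Proposition~\ref{prop:ix} together with linear nondegeneracy of $\Omega$. The only difference is that you spell out the step the paper compresses into the clause ``Since $\Omega\subset\PP^{13}$ is linearly non-degenerate\dots'', namely recovering the hyperplane from its section, and your key lemma there is indeed valid (a smooth, hence reduced, hyperplane section of an irreducible nondegenerate variety of dimension at least two spans its hyperplane; e.g.\ if it lay in a codimension-two subspace $L$, comparing degrees of the divisors cut by the pencil of hyperplanes through $L$ would force two independent linear forms to cut the same divisor on $\Omega$, contradicting nondegeneracy).
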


\begin{proof}
Since $\Omega\subset\PP^{13}$ is linearly non-degenerate, the smooth hyperplane sections $V(h)=\Omega\cap h^\bot$ 
of $\Omega$ are parameterized by $h^\bot\in (\PP^{13})^\vee\setminus D_\ell$, see Proposition~\ref{prop:summary}\ref{prop:summary:ii}. 
The assertion follows now from Proposition~\ref{prop:ix}.
\end{proof}

The next corollary 
proves Theorem~\ref{thm:aut0}\ref{thm:aut0:iv}.

\begin{scor}
\label{cor:new}
Under the Mukai realization of $V$ as a hyperplane section of the $G$-homogeneous fivefold $\Omega$ 
the group $\Aut(V)$ coincides with the stabilizer of $V$ in $G$. 
\end{scor}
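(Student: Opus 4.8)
The plan is to deduce Corollary~\ref{cor:new} as the special case $V'=V$ of Proposition~\ref{prop:ix}, combined with the faithfulness of the $G$-action on $\Omega$. The statement asserts an equality of groups, so I would exhibit mutually inverse homomorphisms between $\Aut(V)$ and $\Stab_G(V)$, the content being that every automorphism of $V$ is induced by an element of $G$ preserving $V$.

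First I would set up the restriction homomorphism $\rho\colon\Stab_G(V)\to\Aut(V)$: any $g\in\Stab_G(V)\subseteq G$ preserves $V=\Omega\cap h^\bot$ and is an automorphism of $\Omega$, hence $g|_V\in\Aut(V)$. For the reverse direction I would apply Proposition~\ref{prop:ix} with $V'=V$, taking $\varphi\in\Aut(V)$ as the chosen isomorphism. It extends $\varphi$ to an automorphism of $\Omega$ lying in $\Ad(G)|_{\Omega}$; since $\Aut(\Omega)=G$ by Proposition~\ref{prop:Omega}\ref{lem:aut-omega}, this extension is $\Ad(g)|_{\Omega}$ for some $g\in G$, and as it carries $V$ onto $V$ we get $g\in\Stab_G(V)$ with $\rho(g)=\varphi$. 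Thus $\rho$ is surjective.

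The crux is the injectivity of $\rho$ (equivalently, the uniqueness of the extension just produced): an element $g\in\Stab_G(V)$ with $g|_V=\id_V$ must be trivial. Here I would use that $V$, being half-anticanonically embedded in $\PP^{12}=h^\bot$, is linearly nondegenerate and therefore spans the hyperplane $h^\bot\subset\PP\fg$; consequently $\Ad(g)$ fixes $h^\bot$ pointwise, i.e. acts on the codimension-one subspace $h^\bot\subset\fg$ by a single scalar $\lambda$. Now $\Ad(g)$ is an automorphism of the simple Lie algebra $\fg=\fg_2$: choosing $x,y\in h^\bot$ with $0\neq[x,y]\in h^\bot$ gives $\lambda[x,y]=\Ad(g)[x,y]=[\lambda x,\lambda y]=\lambda^2[x,y]$, so $\lambda=\lambda^2$ and hence $\lambda=1$ (and the isometry of the Killing form corroborates $\lambda^2=1$). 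Thus $\Ad(g)=\id$ on $h^\bot$; propagating through brackets $[x,y]\notin h^\bot$ shows $\Ad(g)=\id$ on all of $\fg$, and since $G$ is centerless its adjoint representation is faithful, so $g=e$.

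The delicate step is precisely this Lie-theoretic injectivity argument, which must rule out a nontrivial projective ``homology'' fixing the hyperplane $h^\bot$ pointwise while moving the remaining points of $\Omega$. Its key inputs are that $h^\bot$ is never a subalgebra of $\fg_2$ (so that brackets $[x,y]$ with $x,y\in h^\bot$ escape $h^\bot$, forcing $\lambda=1$ to spread to the transverse direction) and that $[h^\bot,h^\bot]\cap h^\bot\neq\{0\}$; both follow from $\operatorname{ad}(h)$ having rank $>1$ for $h\neq0$. Granting injectivity, $\rho\colon\Stab_G(V)\xrightarrow{\ \cong\ }\Aut(V)$ is an isomorphism, which proves Corollary~\ref{cor:new} and, together with Corollary~\ref{cor:orbits-isom.classes}, establishes Theorem~\ref{thm:aut0}\ref{thm:aut0:iv}.
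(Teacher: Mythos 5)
Your proof is correct and follows the same skeleton as the paper's: surjectivity of the restriction map $\Stab_G(V)\to\Aut(V)$ is obtained exactly as in the text, by applying Proposition~\ref{prop:ix} with $V'=V$ and invoking $\Aut(\Omega)=G$ (Proposition~\ref{prop:Omega}\ref{lem:aut-omega}). The genuine difference is the injectivity step: the paper disposes of it in one line by citing Lemma~7.8 of~\cite{PZ18}, which asserts that the stabilizer of $V$ in $G$ acts effectively on $V$, whereas you reprove this effectivity from scratch by a Lie-theoretic argument (an element fixing $V$ pointwise acts on $h^\bot$ by a scalar $\lambda$; a bracket staying inside $h^\bot$ forces $\lambda=1$; a bracket escaping $h^\bot$ propagates the identity to all of $\fg$; and faithfulness of $\Ad$ for the centerless group of type \type{G_2} gives $g=e$). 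What the citation buys is brevity; what your route buys is a self-contained proof, and it correctly isolates the point a careless reader might miss, namely ruling out a projective homology fixing the hyperplane $h^\bot$ pointwise.

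Two small points of precision in your injectivity argument, both easily repaired. First, from ``$V$ spans $h^\bot$'' alone one cannot conclude that $\Ad(g)$ is scalar on $h^\bot$: a diagonalizable map admits spanning sets of eigenvectors without being scalar. What you need is that the fixed locus of $\Ad(g)$ in $\PP\fg$ is a disjoint union of projectivized eigenspaces, so the \emph{irreducible} variety $V$ lies in a single one, which by linear nondegeneracy must be all of $\PP(h^\bot)$. Second, ``$\rk\operatorname{ad}(x)>1$'' is not quite sufficient for your two bracket claims: to produce $y$ with $0\neq[x,y]\in h^\bot$ you want $\operatorname{ad}(x)(h^\bot)$ of dimension at least $2$, and to exclude $[h^\bot,h^\bot]\subseteq h^\bot$ (equivalently, via invariance of the Killing form, $[h,h^\bot]\subseteq\Bbbk h$) you likewise need rank at least $3$. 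Both hold with ample room: for $\fg=\fg_2$ every nonzero $x$ satisfies $\rk\operatorname{ad}(x)\geq 6$, the dimension of the cone over the minimal orbit $\Omega$. With these adjustments your argument is complete and, combined with Corollary~\ref{cor:orbits-isom.classes}, yields Theorem~\ref{thm:aut0}\ref{thm:aut0:iv} as in the paper.
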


\begin{proof}
Due to Lemma~7.8 in~\cite{PZ18} the stabilizer of $V$ in $G=\Aut(\Omega)$ 
(see Proposition~\ref{prop:Omega}\ref{lem:aut-omega}) acts effectively on $V$. Now 
Proposition~\ref{prop:ix} implies the assertion. 
\end{proof}

\begin{srem}
\label{rem:old-rem}
This corollary extends~\cite[Theorem~1.3]{PZ18}. It also disproves Remark~15.5 in~\cite{PZ18}. 
Indeed, in this remark
we erroneously considered the stabilizer in $G$ of a nonzero vector $h\in\fg$ 
instead of the stabilizer in $G$ of the line $\Bbbk h\in\PP\fg=\PP^{13}$. 
\end{srem}

\subsection{The automorphism groups as stabilizers}

The following proposition and its corollary will be important in the proof of Theorem~\ref{mthm:2}. 

\begin{prop}
\label{lemma:max-torus} 
Consider a smooth Fano--Mukai fourfold $V=V(h)=\Omega\cap h^\bot$ where $h\in\fg$ 
is nonzero, see \xref{sit:Mukai-presentation}. Let $\fh\subset\fg$ be a Cartan subalgebra. 
\begin{enumerate} 
\item 
\label{lemma:max-torus:i} 
The group $\Aut(V)$ is reductive if and only if the element $h\in\fg$ is semisimple.
\item
\label{lemma:max-torus:ii} 
If the group $\Aut(V)$ is reductive then 
there exist $h'\in\fh\setminus \{0\}$ and $g\in G$ such that $V=\Ad(g)(V(h'))$.
\end{enumerate}
\end{prop}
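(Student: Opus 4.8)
The plan is to establish both equivalences by exploiting the correspondence between $\Aut(V)$ and the stabilizer of $V$ in $G$, together with the orbit classification of Proposition~\ref{lem:classification-of-orbits} and the explicit description of $\Aut^0(V)$ in Proposition~\ref{prop:aut0}. The key observation is that, by Corollary~\ref{cor:new}, $\Aut(V)$ coincides with $\Stab_G(V)$, and the identity component $\Aut^0(V)$ is determined by the $\Ad(G)$-orbit to which $\PP h$ belongs. Thus the reductivity of $\Aut(V)$ can be read off directly from the three cases of Proposition~\ref{prop:aut0}: the groups $\Gm^2$ and $\GL_2(\Bbbk)$ are reductive, whereas $\Ga\times\Gm$ is not, since it contains the unipotent radical $\Ga$.

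For part~\ref{lemma:max-torus:i}, I would argue as follows. The element $h\in\fg$ is semisimple precisely when $\PP h$ lies in the image of the regular (or subregular) semisimple locus, and by Proposition~\ref{lem:classification-of-orbits}\ref{lem:classification-of-orbits:i} together with~\ref{lem:classification-of-orbits:ii} this corresponds to $\PP h\in\bigl(\PP\fg\setminus(D_\ell\cup D_\s)\bigr)\cup\Omega^{\s\rrr}$. In these two cases Proposition~\ref{prop:aut0} gives $\Aut^0(V)\cong\Gm^2$ or $\GL_2(\Bbbk)$, both reductive. Conversely, if $h$ is not semisimple, then (since $V$ is smooth, so $\PP h\notin D_\ell$ by Proposition~\ref{prop:summary}\ref{prop:summary:ii}) the only remaining possibility from Proposition~\ref{prop:aut0} is $\PP h\in\Omega^{\aaa}$, which gives $\Aut^0(V)\cong\Ga\times\Gm$, a non-reductive group. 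Hence $\Aut(V)$ is reductive if and only if $h$ is semisimple. One subtlety to address is the passage between the condition on $h$ and the condition on the line $\PP h$: since being semisimple is a property of the whole line $\Bbbk h$, this causes no trouble, but I would state it carefully in light of Remark~\ref{rem:old-rem}.

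For part~\ref{lemma:max-torus:ii}, assuming $\Aut(V)$ is reductive, part~\ref{lemma:max-torus:i} tells us $h$ is semisimple. Every semisimple element of $\fg$ is $\Ad(G)$-conjugate to an element of the fixed Cartan subalgebra $\fh$; this is the standard fact that all Cartan subalgebras are conjugate and every semisimple element lies in some Cartan subalgebra. Thus there exists $g\in G$ with $\Ad(g^{-1})(h)=:h'\in\fh\setminus\{0\}$, whence $\Ad(g)(h')=h$ and correspondingly $\Ad(g)\bigl(V(h')\bigr)=V(\Ad(g)(h'))=V(h)=V$, using that $\Ad(g)$ preserves $\Omega$ and carries the hyperplane section $(h')^\bot$ to $h^\bot$.

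The main obstacle is the verification that the three cases in Proposition~\ref{prop:aut0} are genuinely exhaustive and mutually exclusive for smooth $V$, so that the reductive/non-reductive dichotomy aligns exactly with the semisimple/non-semisimple dichotomy. This hinges on correctly identifying $D_\s\setminus D_\ell=\Omega^{\s\rrr}\cup\Omega^{\aaa}$ as the locus of non-regular semisimple plus non-semisimple regular elements (Proposition~\ref{lem:classification-of-orbits}\ref{lem:classification-of-orbits:ii}), and on confirming that no smooth $V$ arises from a nilpotent $h$, which is ruled out since nilpotent elements lie in $D_\ell\cap D_\s\subset D_\ell$ and hence yield singular sections. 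Once this bookkeeping of orbits is pinned down, both assertions follow cleanly.
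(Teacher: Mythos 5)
Your proof is correct, and part \ref{lemma:max-torus:ii} coincides with the paper's argument: conjugate the semisimple element $h$ into the fixed Cartan subalgebra $\fh$ and use that $\Ad(g)$ preserves $\Omega$ and the Killing form, so that $\Ad(g)(V(h'))=V(\Ad(g)h')$. For part \ref{lemma:max-torus:i}, however, you take a genuinely different route. The paper argues intrinsically: since a Cartan subalgebra of a semisimple Lie algebra equals its own centralizer \cite[Theorem~III.5.3]{Serre66}, a nonzero $h$ with $V(h)$ smooth is semisimple if and only if the stabilizer $G_{\PP h}=\Aut(V)$ contains a maximal torus of $G$, and by the short list \eqref{aeq:Aut0} of possible groups $\Aut^0(V)$ this happens exactly when $\Aut(V)$ is reductive; no orbit stratification is needed. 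You instead read reductivity off the explicit bookkeeping: smoothness forces $\PP h\notin D_\ell$, Proposition~\ref{lem:classification-of-orbits} decomposes the rest into the regular semisimple locus, $\Omega^{\s\rrr}$ and $\Omega^{\aaa}$, and Proposition~\ref{prop:aut0} matches these with $\Gm^2$, $\GL_2(\Bbbk)$ and $\Ga\times\Gm$. This buys a concrete picture of which orbits yield which groups (used by the paper later anyway), at the price of depending on the imported orbit analysis and of one imprecision worth fixing: it is \emph{not} true for arbitrary nonzero $h$ that semisimplicity corresponds to $\PP h\in\bigl(\PP\fg\setminus(D_\ell\cup D_\s)\bigr)\cup\Omega^{\s\rrr}$, since subregular semisimple elements annihilated by a long root lie on $D_\ell$ (their lines in $\PP\fh$ give $\Orb_\ell=(D_\ell\cap \PP\fh)_{\red}$, cf.\ Lemma~\ref{lemma:intersections}); these are excluded here only by the smoothness hypothesis, which you invoke in the converse direction but should state as the standing assumption for the whole case analysis. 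Note also that Corollary~\ref{cor:new} is not actually needed for \ref{lemma:max-torus:i}: Proposition~\ref{prop:aut0} already determines $\Aut^0(V)$, and reductivity of $\Aut(V)$ is equivalent to that of $\Aut^0(V)$ because the unipotent radical lies in the identity component.
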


\begin{proof}
\ref{lemma:max-torus:i} 
Recall~\cite[Theorem~III.5.3]{Serre66} 
that any Cartan subalgebra of a semisimple Lie algebra coincides with its centralizer. It follows that
an element $h\in\fg\setminus \{0\}$ is semisimple if and only if the stabilizer
\footnote{Abusing notation, in the sequel we write $G_{h}$ meaning $G_{\PP h}$.}
$G_{\PP h}=\Aut(V)$ contains a maximal torus of $G$. 
Due to \eqref{aeq:Aut0} the latter occurs if and only if
$\Aut(V)$ is reductive. 

\ref{lemma:max-torus:ii} follows from the facts that 
any semisimple element $h\in\fg$ is contained in some 
Cartan subalgebra and any two Cartan subalgebras are conjugate.
\end{proof}

\begin{scor}
\label{cor:max-torus-new} 
Let $\fH$ be the family 
of hyperplane sections $V(h)$ of $\Omega$, $h\in \fh\setminus\{0\}$, parameterized
by the projective line $\PP\fh\subset\PP\fg$ where $\fh\subset\fg$ is a Cartan subalgebra. Then 
any Fano--Mukai fourfold $V$ of genus $10$ with a reductive group $\Aut(V)$ is isomorphic to a member of $\fH$. 
\end{scor}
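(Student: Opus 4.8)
The plan is to deduce Corollary~\ref{cor:max-torus-new} directly from Proposition~\ref{lemma:max-torus}, using the Mukai presentation established in~\xref{sit:Mukai-presentation}. First I would invoke the existence of a Fano-Mukai presentation: any Fano-Mukai fourfold $V$ of genus $10$ can be written as $V\cong V(h)=\Omega\cap h^\bot$ for some nonzero $h\in\fg$. This is precisely the content of the Mukai realization recalled in~\xref{sit:Mukai-presentation}, so we may assume from the start that $V$ is of this form.

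The key step is then to translate the hypothesis that $\Aut(V)$ is reductive into information about $h$. By Proposition~\ref{lemma:max-torus}\ref{lemma:max-torus:i}, reductivity of $\Aut(V)$ is equivalent to $h$ being a semisimple element of $\fg$. Having established that $h$ is semisimple, I would apply Proposition~\ref{lemma:max-torus}\ref{lemma:max-torus:ii}: there exist $h'\in\fh\setminus\{0\}$ and $g\in G$ such that $V=V(h)=\Ad(g)\bigl(V(h')\bigr)$. Since $\Ad(g)$ is an automorphism of $\Omega$ (indeed $\Ad(G)=\Aut(\Omega)$ by Proposition~\ref{prop:Omega}\ref{lem:aut-omega}), it restricts to an isomorphism $V(h')\stackrel{\cong}{\longrightarrow}V(h)=V$. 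Thus $V$ is isomorphic to the member $V(h')$ of the family $\fH$, which is exactly the desired conclusion.

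The argument is essentially a two-line chaining of the two parts of Proposition~\ref{lemma:max-torus} together with the existence of a Mukai presentation, so there is no substantive obstacle to overcome here; all the real work has already been done in proving that proposition. The one point deserving care is simply to observe that the conjugating element $g\in G$ yields a genuine \emph{isomorphism} of varieties $V(h')\cong V$, rather than merely an abstract equivalence — but this is immediate because $\Ad(g)$ preserves $\Omega$ and carries one hyperplane section onto the other. I would also note in passing that every $h'\in\fh\setminus\{0\}$ with $\PP h'\notin D_\ell$ indeed gives a \emph{smooth} Fano-Mukai fourfold, by Proposition~\ref{prop:summary}\ref{prop:summary:ii}, so that the family $\fH$ genuinely consists (generically) of such fourfolds; this confirms that the target of the isomorphism lies within the intended family.
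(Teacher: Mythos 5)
Your proposal is correct and follows exactly the route the paper intends: the paper states this corollary without a separate proof precisely because it is the immediate chaining of the Mukai presentation from~\xref{sit:Mukai-presentation} with the two parts of Proposition~\ref{lemma:max-torus}, just as you do. Your added observation that $\Ad(g)\in\Aut(\Omega)$ carries $V(h')$ isomorphically onto $V$ is the same (implicit) step the paper relies on, so there is nothing to add.
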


\begin{srem}
\label{rem:stab} 
Let $G_{\fh}\subset G$ be the stabilizer of the line $\PP\fh\subset \PP\fg$
and let $G_{h, \fh}\subset G_{\fh}\subset G$ be the stabilizer of the flag $\PP h\subset \PP\fh\subset \PP\fg$ where
$h\in \fh\setminus \{0\}$. 
It is clear that $G_{\fh}\subset G$ coincides with the normalizer ${\N}_{G}(T)$ of the maximal torus $T$ in $G$ with ${\Lie}(T)=\fh$. There is a splitting
\begin{equation*}
G_{\fh}=\N_{G}(T)\cong T\rtimes \W
\end{equation*} where
$\W={\N}_{G}(T)/T$ stands for the Weyl group of $G$;
see \cite[Theorem~A]{AH17}.
\end{srem}

\begin{lem} 
\label{lemma:stabilizer} 
Consider a Fano--Mukai fourfold $V=V(h)$ where $h\in\fh\setminus\{0\}$. Assume $\Aut^0(V)=T\cong\Gm^2$.
One has
\begin{equation*}
\Aut(V)=G_{h, \fh}\subset G_{\fh}\cong T\rtimes \W.
\end{equation*}
\end{lem}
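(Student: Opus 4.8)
The plan is to reduce the computation of $\Aut(V)$ to a stabilizer computation inside $G$ and then to exploit the hypothesis $\Aut^0(V)=T$ to pin down the discrete part. First I would invoke Corollary~\ref{cor:new}, which identifies $\Aut(V)$ with the stabilizer of $V=\Omega\cap h^\perp$ in $G$. Since $\Omega$ is linearly non-degenerate in $\PP\fg$, a smooth hyperplane section determines its hyperplane, so $g\in G$ stabilizes $V$ precisely when $\Ad(g)$ stabilizes $h^\perp$. Because the adjoint action preserves the Killing form, $\Ad(g)(h^\perp)=(\Ad(g)h)^\perp$, and this equals $h^\perp$ if and only if $\Ad(g)$ fixes the line $\Bbbk h$. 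This identifies $\Aut(V)$ with the stabilizer $G_{\PP h}$ of $\PP h$ in $G$.

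Next I would use the hypothesis. Since $\fh$ is abelian, $T$ fixes $h$ and hence $\PP h$, so $T\subseteq G_{\PP h}$; combined with the previous step and the assumption $\Aut^0(V)=T$, the identity component of $G_{\PP h}$ is exactly $T$. For any $g\in G_{\PP h}$, conjugation by $g$ is an inner automorphism of the algebraic group $G_{\PP h}$ and therefore preserves its identity component. Thus $gTg^{-1}=T$, that is, $g\in\N_G(T)$, giving the inclusion $G_{\PP h}\subseteq\N_G(T)$.

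Finally, by Remark~\ref{rem:stab} one has $\N_G(T)=G_\fh\cong T\rtimes\W$, the stabilizer of the line $\PP\fh$; indeed $\N_G(T)$ preserves $\fh=\Lie(T)$ as a subspace. Combining the two inclusions, every element of $G_{\PP h}$ stabilizes both $\PP h$ and $\PP\fh$, so $G_{\PP h}\subseteq G_{h,\fh}$. The reverse inclusion is immediate, since stabilizing the flag $\PP h\subset\PP\fh$ in particular stabilizes $\PP h$. Therefore $\Aut(V)=G_{\PP h}=G_{h,\fh}$, as claimed.

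The argument is essentially formal once the identification $\Aut(V)=G_{\PP h}$ is in place, so I do not expect a serious obstacle. The only points requiring care are the compatibility of the adjoint action with the Killing form in the first step and, in the second step, the fact that the neutral component of an algebraic group is a characteristic subgroup — which is exactly what forces the discrete part of $G_{\PP h}$ to land in the normalizer of $T$ rather than merely in the stabilizer of $\PP h$.
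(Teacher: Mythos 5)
Your proof is correct and follows essentially the same route as the paper: identify $\Aut(V)$ with the stabilizer $G_{\PP h}$ via Corollary~\ref{cor:new}, note that $(G_{\PP h})^0=T$ by hypothesis, and conclude that $G_{\PP h}$ normalizes $T$, hence preserves $\fh=\Lie(T)$ and the flag $\PP h\subset\PP\fh$, giving $G_{\PP h}=G_{h,\fh}$. You merely make explicit two points the paper leaves implicit — the Killing-form duality identifying the stabilizer of the hyperplane $h^\perp$ with that of the point $\PP h$, and the fact that the identity component is a characteristic subgroup — which is a harmless elaboration, not a different argument.
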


\begin{proof} 
The stabilizer $G_h$ of the point $\PP h\in \PP\fg$ coincides with $\Aut(V)$,
see Corollary~\ref{cor:new}.
By our assumption, $(G_h)^0=\Aut^0(V)=T$ is a maximal torus of $G$.
Clearly, $G_h\supset G_{h, \fh}$.
On the other hand, since $(G_h)^0=T$, the Cartan subalgebra of $\fg=\Lie(T)$ 
is preserved under the action of $G_h$, hence $G_h\subset G_{h, \fh}$. 
It follows that 
\[
\Aut(V)=G_h=G_{h, \fh}.\qedhere
\] 
\end{proof}

\begin{srem}
\label{rem:sym} Since $\fh$ is abelian, the torus $T$ acts trivially on $\PP\fh$. 
The Weyl group $\W$ is isomorphic to the dihedral group $\fD_6\cong \Sym_3\times (\ZZ/2\ZZ)$ where $\Sym_3$ stands for the symmetric group of rank three. The center of $\fD_6$ acts via the central symmetry on $\fh$ 
and trivially on $\PP\fh$, and no other element of $\fD_6$ does. Hence, the $\N_{G}(T)$-action on $\fh$ induces an effective $\Sym_3$-action 
on $\PP\fh\cong\PP^1$.
The orbits of the latter action
are as follows.
\end{srem}

\begin{lem} 
\label{lemma:stabilizer-S3} 
Any effective $\Sym_3$-action on $\PP^1$ has exactly $3$ orbits with nontrivial stabilizers, namely,
two orbits $\Orb_{\ell}$, $\Orb_{\s}$ of length $3$ and one orbit $\Orb_{\rrr}$ of length $2$.
\end{lem}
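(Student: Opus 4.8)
The plan is to classify effective actions of $\Sym_3$ on $\PP^1$ up to conjugacy and then read off the orbit structure from the subgroup lattice. First I would recall that $\Sym_3$ is generated by a transposition $\sigma$ of order $2$ and a $3$-cycle $\tau$ of order $3$, with the relations $\sigma^2=\tau^3=1$ and $\sigma\tau\sigma^{-1}=\tau^{-1}$. Since the action is effective, we get an injective homomorphism $\Sym_3\hookrightarrow\Aut(\PP^1)\cong\PG_2(\Bbbk)$, so I will regard $\Sym_3$ as a finite subgroup of $\PG_2(\Bbbk)$ acting by M\"obius transformations. Up to conjugacy in $\PG_2(\Bbbk)$ there is a unique such embedding: the order-$3$ element $\tau$ has exactly two fixed points on $\PP^1$, which I may place at $0$ and $\infty$, so that $\tau$ becomes $z\mapsto\zeta z$ with $\zeta$ a primitive cube root of unity; the relation $\sigma\tau\sigma^{-1}=\tau^{-1}$ forces $\sigma$ to interchange these two fixed points, and normalizing gives $\sigma\colon z\mapsto 1/z$. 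This pins down the action completely.

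Next I would compute the orbits with nontrivial stabilizer directly from this normal form. The stabilizer of a point $p\in\PP^1$ is a subgroup of $\Sym_3$, and the only proper nontrivial subgroups are the unique cyclic group $A_3=\langle\tau\rangle$ of order $3$ and the three conjugate subgroups of order $2$ generated by the transpositions. For a point to have stabilizer of order $3$ it must be fixed by $\tau$, and the two fixed points $0,\infty$ of $\tau$ are swapped by $\sigma$; they therefore form a single orbit $\Orb_{\rrr}$ of length $[\Sym_3:A_3]=2$. For a point to have stabilizer of order $2$ it must be fixed by some transposition. The transposition $\sigma\colon z\mapsto 1/z$ has the two fixed points $z=\pm 1$, and each of the three conjugate transpositions similarly contributes two fixed points; since a point with stabilizer of order $2$ lies in an orbit of length $[\Sym_3:\langle\sigma\rangle]=3$, I expect these fixed points to assemble into orbits of length $3$. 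A short check shows the six fixed points of the three transpositions split into exactly two such orbits, $\Orb_{\ell}$ and $\Orb_{\s}$: for instance the orbit of $z=1$ and the orbit of $z=-1$ under $\langle\tau\rangle$ are distinct orbits of length $3$, and these account for all points with nontrivial stabilizer.

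Finally I would confirm that no further orbits with nontrivial stabilizer occur. Any point not of the above types has trivial stabilizer and hence lies in a free orbit of length $6$; this follows because the only subgroups of $\Sym_3$ are the trivial group, the three order-$2$ transposition subgroups, the order-$3$ subgroup $A_3$, and all of $\Sym_3$, and a point with full stabilizer $\Sym_3$ would be fixed by $\tau$ (putting it among $0,\infty$) and by $\sigma$ (putting it among $\pm1$), which is impossible. Thus the three orbits with nontrivial stabilizer are exactly $\Orb_{\rrr}$ of length $2$ and $\Orb_{\ell},\Orb_{\s}$ of length $3$, as claimed. I do not anticipate a serious obstacle here; the only point requiring care is the uniqueness of the embedding $\Sym_3\hookrightarrow\PG_2(\Bbbk)$ up to conjugacy, which guarantees that the orbit structure is independent of the chosen effective action and so justifies stating the conclusion for \emph{any} effective $\Sym_3$-action on $\PP^1$.
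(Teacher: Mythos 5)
Your proof is correct, but it takes a genuinely different route from the paper's. The paper disposes of the lemma in one line: apply the Riemann--Hurwitz formula to the Galois covering $\PP^1\to\PP^1/\Sym_3\cong\PP^1$. The total ramification must equal $10$, and an orbit whose points have stabilizer of order $e$ contributes $6-6/e$; since point stabilizers of a faithful action on a smooth curve in characteristic zero are cyclic while $\Sym_3$ is not, only $e=2,3$ occur, and the unique solution of $3a+4b=10$ is $(a,b)=(2,1)$, i.e.\ two orbits of length $3$ and one of length $2$. You instead classify effective $\Sym_3$-actions up to conjugacy in $\Aut(\PP^1)\cong\PG_2(\Bbbk)$, normalize $\tau\colon z\mapsto\zeta z$ and $\sigma\colon z\mapsto 1/z$, and exhibit the orbits concretely: $\Orb_{\rrr}=\{0,\infty\}$ and the two length-$3$ orbits $\{1,\zeta,\zeta^2\}$ and $\{-1,-\zeta,-\zeta^2\}$ among the sixth roots of unity. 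Your normalization argument is sound (the order-$3$ element is diagonalizable with exactly two fixed points over an algebraically closed field of characteristic zero, and the relation $\sigma\tau\sigma^{-1}=\tau^{-1}$ forces $\sigma$ to swap them, after which a scaling conjugation pins down $\sigma$), and your direct exclusion of a point with full stabilizer replaces the cyclicity-of-stabilizers fact that the Riemann--Hurwitz argument uses implicitly. The trade-off: your route is longer but more elementary and self-contained, and it yields strictly more than the lemma asserts --- an explicit normal form and the uniqueness of the action up to conjugacy, which incidentally is how you justify the word ``any'' in the statement; the paper's argument is coordinate-free, needs no conjugacy classification since it applies intrinsically to whatever action is given, and generalizes at once to other finite groups acting on $\PP^1$.
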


\begin{proof} 
This follows from the Riemann-Hurwitz formula
applied to the Galois covering $\PP^1\to \PP^1/\Sym_3\cong\PP^1$.
\end{proof}

\begin{lem} 
\label{lemma:intersections}
Up to permutation of $\Orb_{\ell}$ and $\Orb_{\s}$ we may assume that 
$\Orb_{\ell}=(D_{\ell}\cap \PP\fh)_{\red}$ and $\Orb_{\s}=(D_{\s}\cap \PP\fh)_{\red}$.
The orbit $\Orb_{\rrr}$ is given by vanishing of the invariant Killing quadratic form on $\fh$.
\end{lem}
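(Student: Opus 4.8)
The plan is to match the three distinguished orbits of Remark~\ref{rem:sym} with the zero loci on $\PP\fh$ of the three natural $\W$-invariant forms carried by the Cartan subalgebra $\fh$: the two restricted sextics $\phi_\ell|_\fh$ and $\phi_\s|_\fh$, and the Killing quadratic form.

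First I would record the relevant invariance. The Weyl group $\W$ preserves root lengths, hence permutes $\Delta_\ell$ and $\Delta_\s$ separately. By Proposition~\ref{prop:summary}\ref{prop:summary:ii},~\ref{prop:summary:iv}, the reduced intersections $(D_\ell\cap\PP\fh)_\red$ and $(D_\s\cap\PP\fh)_\red$ are the images in $\PP\fh$ of the hyperplane unions $\bigcup_{\balpha\in\Delta_\ell}\ker\balpha$ and $\bigcup_{\balpha\in\Delta_\s}\ker\balpha$, respectively. Both unions are $\W$-stable, so both triples are invariant under the induced effective $\Sym_3$-action on $\PP\fh$ of Remark~\ref{rem:sym}.

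Next I would count and then identify the orbits. The six long roots form three pairs $\{\balpha,-\balpha\}$ with $\ker\balpha=\ker(-\balpha)$, giving three distinct lines in $\fh$, that is, three distinct points of $\PP\fh\cong\PP^1$; the same holds for the short roots. Since the reflection arrangement $\bigcup_{\balpha\in\Delta}\ker\balpha$ of the dihedral group $\W$ consists of exactly six distinct lines, the triples $(D_\ell\cap\PP\fh)_\red$ and $(D_\s\cap\PP\fh)_\red$ are disjoint. By Lemma~\ref{lemma:stabilizer-S3} the $\Sym_3$-action on $\PP\fh$ has no fixed point, for such a point would be a further orbit with nontrivial (indeed full) stabilizer beyond the three listed there; hence every orbit has length $2$, $3$ or $6$, and a three-element invariant set admits no nontrivial decomposition and must be a single orbit of length $3$. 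Thus each of the two disjoint triples is an orbit of length $3$, so after relabelling they are $\Orb_\ell$ and $\Orb_\s$, which proves the first assertion.

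Finally I would treat $\Orb_\rrr$ by means of the Killing form $\kappa$. Being $G$-invariant, $\kappa$ is in particular $\W$-invariant, and $\kappa|_\fh$ is the invariant quadratic form on the Cartan subalgebra, which is nondegenerate. Over the algebraically closed field $\Bbbk$ a nondegenerate binary quadratic form has two distinct zeros in $\PP\fh\cong\PP^1$, and this pair is $\Sym_3$-invariant by invariance of $\kappa$. By the same absence of fixed points it cannot be a pair of fixed points, so it is a single orbit of length $2$, necessarily the unique such orbit $\Orb_\rrr$. The one delicate point is this orbit bookkeeping: everything hinges on extracting from Lemma~\ref{lemma:stabilizer-S3} that there are no $\Sym_3$-fixed points on $\PP\fh$, after which the invariant triples and the invariant pair have no room to break up; the remaining ingredients — $\W$-invariance of the three forms, the distinctness of the six reflection lines of $\W$, and nondegeneracy of $\kappa|_\fh$ — are standard.
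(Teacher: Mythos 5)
Your proof is correct and takes essentially the same route as the paper's: both arguments identify the zero loci on $\PP\fh$ of $\phi_\ell|_{\fh}$, $\phi_\s|_{\fh}$ and of the Killing form as $\Sym_3$-invariant sets of sizes $3$, $3$, $2$, and then invoke Lemma~\ref{lemma:stabilizer-S3} to force each to be a single orbit of the respective length. The only cosmetic difference is that you work directly with the root kernels in $\fh$ (usefully making explicit the disjointness of the two triples and the absence of $\Sym_3$-fixed points), whereas the paper passes to $\fh^\vee$ and uses Killing-orthogonal pairs of long and short roots to identify the reflection mirrors with the triples $\PP\Delta_\s$ and $\PP\Delta_\ell$.
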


\begin{proof} 
The Killing quadratic form on $\fh$ is 
$\W$-invariant. Hence, its zero set in $\PP\fh$
is $\Sym_3$-invariant. 
Since the Killing form on $\fh$ is nondegenerate,
this set coincides with the orbit $\Orb_{\rrr}$ of length $2$.

The Weyl group acts on the plane $\fh^\vee$ via the standard representation 
$\fD_6\hookrightarrow \GL_2(\Bbbk)$. This group is generated by reflexions and preserves the sets 
$\Delta_\ell$ and $\Delta_\s$ of long and short roots, respectively. 
The mirrors of reflexions from $\W$ are  the  vector lines in $\fh^\vee$ through the pairs of opposite roots from $\Delta_\ell$ and $\Delta_\s$. 
These 6 lines are projected to 
the triples of points $\PP\Delta_{\s}$ and $\PP\Delta_{\ell}$ of the projective line $\PP\fh=\PP\fh^\vee$. Being
$\Sym_3$-invariant, these triples coincide with the orbits $\Orb_{\ell}$ and $\Orb_{\s}$. 
By Lemma~\ref{lemma:stabilizer-S3} we may assume that 
$\Orb_{\ell}=\PP\Delta_{\s}$ and $\Orb_{\s}=\PP\Delta_{\ell}$. 

Consider on $\fh$ the polynomials 
\[
\psi_{\ell}=\prod_{\balpha\in\Delta_{\ell}}\balpha=-\Bigl(\prod\nolimits_{\balpha\in\Delta_{\ell}^+}\balpha\Bigr)^2,\qquad 
\psi_{\s}=\prod_{\balpha\in\Delta_{\s}}\balpha=-\Bigl(\prod\nolimits_{\balpha\in\Delta_{\s}^+}\balpha\Bigr)^2
\] 
where $\Delta_{\ell}^+,\Delta_{\s}^+$ are the sets of positive long and short roots, respectively. These polynomials are $\W$-invariant
and vanish with multiplicity $2$ on $\PP\Delta_{\s}=\Orb_{\ell}$ and $\PP\Delta_{\ell}=\Orb_{\s}$, respectively. 
Indeed, there exist pairs of Killing-orthogonal short and long roots. 
By Proposition~\ref{prop:summary}\ref{prop:summary:ii},\ref{prop:summary:iv} one has $\psi_{\ell}=\phi_{\ell}|_{\fh}$ and $\psi_{\s}=\phi_{\s}|_{\fh}$. 
It follows that $\Orb_{\ell}=(D_{\ell}\cap \PP\fh)_{\red}$ and $\Orb_{\s}=(D_{\s}\cap \PP\fh)_{\red}$, see loc. cit.
\end{proof}

\begin{scor} 
\label{cor:stabilizer-D6} 
The stabilizer $\W_h\subset \W\cong \fD_6$ of a point $\PP h\in\PP\fh$ under the $\W$-action on $\PP\fh$ is isomorphic to 
\[
\begin{cases}
\ZZ/2\ZZ &\text{if $\PP h\notin \Orb_\ell\cup\Orb_\s\cup\Orb_\rrr$;}
\\
\ZZ/6\ZZ &\text{if $\PP h\in\Orb_\rrr$}.
\end{cases}
\]
In any case $\W_h$ contains the center $\z(\W)\cong \ZZ/2\ZZ$ of $\W\cong \fD_6$.
For the stabilizer $G_h\subset G$ of $\PP h$ we have
\[
G_h\cong 
\begin{cases}
\Gm^2\rtimes\ZZ/2\ZZ &\text{if $\PP h\notin \Orb_\ell\cup\Orb_\s\cup\Orb_\rrr$,}
\\
\Gm^2\rtimes\ZZ/6\ZZ &\text{if $\PP h\in\Orb_\rrr$.}
\end{cases}
\]
In the first case the generator of $\ZZ/2\ZZ$ acts 
on $\Gm^2$ via $g\mapsto g^{-1}$ 
and in the second case the generator of $\ZZ/6\ZZ$ acts 
on $\Gm^2$ via an element of order $6$ unique up to conjugacy in $\Aut(\Gm^2)\simeq \GL_2(\ZZ)$. 
\end{scor}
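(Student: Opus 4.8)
The plan is to first determine the stabilizer $\W_h$ of $\PP h$ in the Weyl group $\W\cong\fD_6$ acting on $\PP\fh$, and then transport the answer to the stabilizer $G_h\subset G$ by means of Lemma~\ref{lemma:stabilizer}. First I would record that, by Remark~\ref{rem:sym}, the center $\z(\W)\cong\ZZ/2\ZZ$ acts trivially on $\PP\fh$; hence $\z(\W)\subset\W_h$ with no hypothesis on $\PP h$, which is exactly the ``in any case'' assertion. Dividing out the center, the image $\W_h/\z(\W)$ is the stabilizer of $\PP h$ for the effective $\Sym_3$-action on $\PP\fh$ from Remark~\ref{rem:sym}, so Lemma~\ref{lemma:stabilizer-S3} computes it: it is trivial when $\PP h$ avoids the three distinguished orbits, and is cyclic of order $3$ when $\PP h\in\Orb_\rrr$, the orbit of length $2$. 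Consequently $|\W_h|=2$ in the first case and $|\W_h|=6$ in the second.

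Next I would settle which order-$6$ subgroup of $\fD_6\cong\Sym_3\times\ZZ/2\ZZ$ occurs. Any subgroup of order $6$ containing the central factor $\z(\W)$ projects isomorphically onto the unique order-$3$ subgroup $A_3\subset\Sym_3$ and hence equals $A_3\times\z(\W)\cong\ZZ/6\ZZ$; the two copies of $\Sym_3$ inside $\fD_6$ are ruled out since neither contains the center. This gives $\W_h\cong\ZZ/6\ZZ$ for $\PP h\in\Orb_\rrr$ and $\W_h=\z(\W)\cong\ZZ/2\ZZ$ otherwise. I would also note that the remaining cases $\PP h\in\Orb_\ell$ and $\PP h\in\Orb_\s$ do not occur under the standing hypothesis $\Aut^0(V)=T\cong\Gm^2$: by Lemma~\ref{lemma:intersections} they mean $\PP h\in D_\ell$, whence $V(h)$ is singular by Proposition~\ref{prop:summary}\ref{prop:summary:ii}, or $\PP h\in D_\s\setminus D_\ell$, whence $\Aut^0(V)$ is $\GL_2(\Bbbk)$ or $\Ga\times\Gm$ by Proposition~\ref{prop:aut0}.

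To pass to $G_h$, I would apply Lemma~\ref{lemma:stabilizer}, which identifies $\Aut(V)=G_h=G_{h,\fh}$ inside the split group $G_{\fh}\cong T\rtimes\W$. Because $T$ fixes every point of $\PP\fh$, an element $(t,w)\in T\rtimes\W$ fixes $\PP h$ precisely when $w\in\W_h$; therefore $G_{h,\fh}=T\rtimes\W_h$, and substituting $T\cong\Gm^2$ together with the two values of $\W_h$ yields the claimed isomorphisms $\Gm^2\rtimes\ZZ/2\ZZ$ and $\Gm^2\rtimes\ZZ/6\ZZ$.

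Finally I would identify the conjugation action of $\W_h$ on $T$, which is the restriction of the standard action of $\W$ on the cocharacter lattice of $T$, i.e.\ of the embedding $\W\hookrightarrow\Aut(\Gm^2)\cong\GL_2(\ZZ)$. For $\fg_2$ the nontrivial element of $\z(\W)$ acts as $-\id$ on $\fh$, hence as $g\mapsto g^{-1}$ on $T$, which gives the first case; in the second case a generator of $\W_h\cong\ZZ/6\ZZ$ is a Coxeter element of $\W$, whose image in $\GL_2(\ZZ)$ has order $6$ and is unique up to conjugacy (its characteristic polynomial is forced to be $x^2-x+1$, and $\ZZ[\zeta_6]$ is a principal ideal domain, so Latimer--MacDuffee leaves a single conjugacy class). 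The main obstacle is organizational rather than deep: one must correctly match the two cases of the statement with the regular semisimple locus so that $\Orb_\ell$ and $\Orb_\s$ are excluded, and verify that the order-$6$ stabilizer is the cyclic and not the dihedral subgroup of $\fD_6$; both points are handled by the center-containment argument above.
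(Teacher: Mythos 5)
Your proposal is correct and follows essentially the same route as the paper: the $\Sym_3$-stabilizers from Lemma~\ref{lemma:stabilizer-S3} combined with the trivial action of $\z(\W)$ on $\PP\fh$, the identification $G_h=G_{h,\fh}=T\rtimes\W_h$ via Lemma~\ref{lemma:stabilizer} and the splitting of Remark~\ref{rem:stab}, and the faithful representation $\W\hookrightarrow\GL_2(\ZZ)$ for the action on $T$. You merely make explicit a few steps the paper leaves implicit --- the reverse inclusion $G_h\subset T\rtimes\W_h$, the center-containment argument singling out the cyclic rather than dihedral order-$6$ subgroup of $\fD_6$, and the Latimer--MacDuffee justification (via the forced characteristic polynomial $x^2-x+1$ and $\ZZ[\zeta_6]$ being a PID) of the conjugacy uniqueness that the paper asserts without proof.
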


\begin{proof}
The first assertion follows immediately from Lemma~\ref{lemma:stabilizer-S3}. 
The second follows from Lemma~\ref{lemma:intersections} due to Proposition~\ref{prop:summary}\ref{prop:summary:ii}. 
Notice that $T\subset G_h$, $\W_h\subset G_h$, and $T\cap \W_h=\{1\}$. Thus, $T\rtimes \W_h\subset G_h$.
The $\W$-action on $\Gm^2$ by conjugation 
yields a faithful representation
$\W=\fD_6\hookrightarrow\Aut(\Gm^2)=\GL_2(\ZZ)$. 
According to Maschke's Theorem the center of $\W$ is represented by scalar matrices. Now the last assertions follow.
\end{proof}

\begin{srem}
In the case $\PP h\in \Orb_\ell$ the fourfold $V(h)$ is singular, and in the case $\PP h\in\Orb_\s$ 
its automorphism group is $\GL_2(\Bbbk)\rtimes\ZZ/2\ZZ$, as it was mentioned at the beginning of this section.
\end{srem}

\subsection{Proof of Theorem~\ref{thm:aut0}\ref{thm:aut0:i}-\ref{thm:aut0:ii}} \label{ss:mthm:2}
As we have already mentioned, Theorem~\ref{thm:aut0}\ref{thm:aut0:i}-\ref{thm:aut0:ii} is reduced to Theorem~\xref{mthm:2}.

\begin{proof}[{Proof of Theorem~\xref{mthm:2}}] 
According to Proposition~\ref{lemma:max-torus}\ref{lemma:max-torus:ii}
we may suppose $\Aut^0(V)=T\cong\Gm^2$ and $V=V(h)$ for some $h\in\fh$. In fact, we have 
$\PP h\in\PP\fh\setminus (\Orb_\ell\cup\Orb_\s)$. Indeed, 
\begin{enumerate}
\item[$\bullet$]
the fourfold $V$ is singular for $\PP h\in\Orb_\ell$,
\item[$\bullet$]
$\Aut^0(V)=\GL_2(\Bbbk)$ for $\PP h\in\Orb_\s$, and 
\item[$\bullet$] $\Aut^0(V)=\Gm^2$ for $\PP h\in\PP\fh\setminus (\Orb_\ell\cup\Orb_\s)$,
\end{enumerate}
see Lemma~\ref{lemma:intersections} and Proposition~\ref{prop:aut0}.
By Corollary~\ref{cor:stabilizer-D6} and Remark~\ref{rem:stab}, $\Aut(V)=T\rtimes\ZZ/6\ZZ$ for $h\in\Orb_\rrr$ and $\Aut(V)=T\rtimes\ZZ/2\ZZ$ for 
$h\in \fh\setminus (\Orb_\rrr\cup\Orb_\ell\cup\Orb_\s)$.
Now the first assertion in 
\ref{mthm:2:i} 
follows due to the fact that the points on the same $\W$-orbit in $\PP\fh$ 
correspond to isomorphic Fano--Mukai fourfolds. The first assertion in 
\ref{mthm:2:ii} is immediate. The last assertions in~\ref{mthm:2:i} and~\ref{mthm:2:ii} are provided by
Corollary~\ref{cor:stabilizer-D6}. 
\end{proof}

\subsection{Geometry of the $\Aut(V)$-action on $V$}
Let again $V$ be a Fano--Mukai fourfold of genus 10. 
By Lemma~\ref{lem:cubic-cones:2}, $V$ contains 6, 4, and a one-parameter family of cubic cones if $\Aut^0(V)=\Gm^2, \Ga\times\Gm$, 
and $\GL_2(\Bbbk)$, respectively. In the latter case exactly 2 of the cubic cones on $V$ are $\Aut^0(V)$-invariant, while in the first two cases all of them are. 
In the first case the 6 cones are arranged in a cycle, and in the second case the 4 cones are arranged in a string, see loc. cit. 
The component group $\Aut(V)/\Aut^0(V)$ acts naturally on this cycle and this string, respectively. This action is described in the next proposition.

\begin{prop}
\label{mthm:2:iii} 
For any Fano--Mukai fourfold $V$ of genus $10$, $\Aut(V)\setminus\Aut^0(V)$ contains an involution which
interchanges the members in each pair of disjoint $\Aut^0(V)$-invariant cubic cones on $V$. In the case $\Aut(V)=\Gm^2\rtimes\ZZ/6\ZZ$ 
any generator of $\ZZ/6\ZZ$ acts on the $6$-cycle of cubic cones on $V$ via a cyclic shift of order $6$. 
\end{prop}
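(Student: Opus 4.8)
The plan is to reduce everything to the combinatorics of the Weyl group $\W\cong\fD_6$ acting on the torus $T$ and on the set of $T$-fixed points of $\Omega$, then to match these fixed points with the cubic cones. By Corollary~\ref{cor:new} every element of $\Aut(V)$ is induced by an element of $G$ stabilizing $\PP h$, and by Lemma~\ref{lemma:stabilizer} this stabilizer is $G_{h,\fh}\subset T\rtimes\W$. So the action of the component group on geometric data defined over $V$ is governed by the residual $\W_h$-action, where $\W_h$ is the stabilizer of $\PP h$ computed in Corollary~\ref{cor:stabilizer-D6}. The key geometric input is Proposition~\ref{prop:Omega}\ref{fixed-points}: the $T$-fixed points of $\Omega$ are exactly the six points $\PP\fg_\balpha$, $\balpha\in\Delta_\ell$, which $\W$ permutes via its standard action on the long roots.

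First I would identify the six cubic cones of Lemma~\ref{lem:cubic-cones:2}\ref{lem:cubic-cones:2:cycle} with the six long-root fixed points. Since each cubic cone is $T$-invariant (the set of cones being finite when $\Aut^0(V)=\Gm^2$), its vertex is a $T$-fixed point of $V\subset\Omega$, hence one of the $\PP\fg_\balpha$ with $\balpha\in\Delta_\ell$; this gives a $\W_h$-equivariant bijection between the $6$-cycle of cones and $\Delta_\ell$. Under the standard representation $\fD_6\hookrightarrow\GL_2(\ZZ)$ the six long roots form a regular hexagon, so the cyclic subgroup of order $6$ in $\W$ acts on them as a rotation by $2\pi/6$, i.e. as a cyclic shift of the hexagon; and the central involution $\z(\W)$ acts as $-\id$, interchanging each root with its negative, i.e. swapping opposite vertices of the hexagon. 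Translating back through the bijection, the order-$6$ generator shifts the $6$-cycle of cones cyclically, and the central involution interchanges the members of each pair of opposite (disjoint) cones. This is precisely the claimed description in the case $\Aut(V)=\Gm^2\rtimes\ZZ/6\ZZ$, since by Corollary~\ref{cor:stabilizer-D6} the order-$6$ part of $\W_h$ is generated by this rotation.

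For the general involution claim I would argue uniformly across the three cases of $\Aut^0(V)$. In every case Corollary~\ref{cor:stabilizer-D6} shows $\W_h$ contains the center $\z(\W)\cong\ZZ/2\ZZ$, which acts on $\fh$ by $-\id$ and therefore on the long roots by $\balpha\mapsto-\balpha$; correspondingly it lies in $\Aut(V)\setminus\Aut^0(V)$ and swaps the two fixed points $\PP\fg_\balpha$, $\PP\fg_{-\balpha}$ in each antipodal pair. Since antipodal long roots correspond to disjoint cubic cones (the ``pairs of opposite members of the cycle'' in Lemma~\ref{lem:cubic-cones:2}\ref{lem:cubic-cones:2:cycle}, and similarly the end vertices of the chain in the $\Ga\times\Gm$ case and the unique invariant pair in the $\GL_2$ case of Lemma~\ref{lem:cubic-cones:2}\ref{lem:cubic-cones:2:i}), this central involution interchanges the members of each pair of disjoint $\Aut^0(V)$-invariant cubic cones, as asserted. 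For the $\GL_2(\Bbbk)$ and $\Ga\times\Gm$ cases one invokes the already-established structure $\Aut(V)=\GL_2(\Bbbk)\rtimes\ZZ/2\ZZ$, resp.\ $(\Ga\times\Gm)\rtimes\ZZ/2\ZZ$, recorded at the start of Section~\ref{sec:aut}, together with the fact that the distinguished involution acts on $\Aut^0(V)$ by $g\mapsto(g^\rt)^{-1}$ and hence geometrically swaps the two invariant vertices.

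The main obstacle I anticipate is establishing rigorously the $\W_h$-equivariance of the bijection between cubic cones and long-root fixed points — that is, verifying that an element of $\Aut(V)$ genuinely sends the cone over $\PP\fg_\balpha$ to the cone over $\PP\fg_{w(\balpha)}$ for the corresponding $w\in\W$, rather than merely permuting the vertices. This requires knowing that each cubic cone is recovered from its vertex as a canonically determined $T$-invariant subvariety of $V$ (e.g.\ as the closure of a distinguished orbit or as an intersection-theoretically characterized cone), so that the $\Aut(V)$-action on cones is forced to agree with its action on vertices; once that identification is secured, matching the rotation and the antipodal involution to the hexagonal root configuration is elementary.
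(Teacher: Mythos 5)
Your proposal is correct and follows essentially the same route as the paper's proof: both identify the six cubic cones with the six $T$-fixed points $\PP\fg_\balpha$, $\balpha\in\Delta_\ell$, via their vertices, let the central involution of $\W$ act by $\balpha\mapsto-\balpha$ to interchange opposite (hence disjoint) cones, let an order-$6$ element of $\W_h$ act as a rotation of the long-root hexagon to produce the cyclic shift, and fall back on \cite[Theorem~1.3(i),(ii)]{PZ18} for the $\GL_2(\Bbbk)$ and $\Ga\times\Gm$ cases (where your $\W_h$-framework cannot apply, since in the $\Ga\times\Gm$ case $h$ is not semisimple). The ``main obstacle'' you flag dissolves just as the paper implicitly treats it: an automorphism of $V$ carries cubic cones to cubic cones and vertices to vertices, and since the six cones have pairwise distinct vertices coinciding with the six $T$-fixed points, the permutation of the cones is already determined by the induced permutation of the root spaces $\fg_\balpha$ --- no intrinsic reconstruction of a cone from its vertex is required.
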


\begin{proof}
In the case where $\Aut^0(V)\neq\Gm^2$
the first statement holds by \cite[Theorem~1.3(i),(ii)]{PZ18}. Suppose further that $\Aut^0(V)=T\cong\Gm^2$. Recall that $\Omega$ 
contains exactly 6 fixed points of $T$ and these are the points $\PP\fg_\balpha$, $\balpha\in\Delta_\ell$, see Proposition~\ref{prop:Omega}\ref{fixed-points}. 
On the other hand, the 6 cubic cones on $V$ being $T$-invariant, 
their vertices are fixed by $T$. Therefore, these vertices coincide with the points $\PP\fg_\balpha$, $\balpha\in\Delta_\ell$. 

By Theorem~\ref{thm:aut0} either $\Aut(V)=T\rtimes\ZZ/2\ZZ$, or $\Aut(V)=T\rtimes\ZZ/6\ZZ$. In both cases, the second factor is a cyclic subgroup 
of the Weyl group $\W$ of $G$ and it contains the center of $\W$. The central involution $\iota\in \W$ acts on $\fh^\vee$ via $\balpha\mapsto -\balpha$. 
It defines an automorphism of the Lie algebra $\fg$ which sends $\fg_\balpha$ to $\fg_{-\balpha}$ for any $\balpha\in\Delta_\ell$, 
see \cite[Proposition~14.3]{Hum72}. Hence, $\iota$ induces a fixed point free involution of the cycle of 6 cubic cones. 
Clearly, the latter involution interchanges members in each pair of opposite vertices of the cycle, that is, in each pair of disjoint cubic cones on $V$, 
see Lemma~\ref{lem:cubic-cones:2}\ref{lem:cubic-cones:2:cycle}. 

Let further $\sigma\in \ZZ/6\ZZ\subset \W$ be an element of order 6 from $\Aut(V)$. 
Then $\sigma$ acts on $\fh^\vee$ via a rotation of order 6 and acts on the $6$-cycle of long roots $\balpha\in\Delta_\ell$
via a cyclic shift of order 6. Hence, $\sigma$ induces an automorphism of the Lie algebra $\fg$ permuting cyclically the 6 root subspaces 
$\fg_\balpha$, $\balpha\in\Delta_\ell$. In turn, the induced action on the cycle of cubic cones on $V$ is a cyclic permutation of order 6, cf.~the proof of Proposition~\ref{prop:Omega}\ref{fixed-points}.
\end{proof}


\newcommand{\etalchar}[1]{$^{#1}$}
\def\cprime{$'$}

\end{document}